\documentclass{article}

\usepackage[T1]{fontenc}
\usepackage[utf8]{inputenc}

\usepackage{amsmath}
\usepackage{amssymb}

\usepackage[
    backend=biber,
    style=numeric,
    sorting=none
]{biblatex}

\usepackage{graphicx}
\usepackage{subcaption}

\usepackage[percent]{overpic}
\usepackage{xcolor}
\definecolor{vone}{RGB}{41,92,158}       
\definecolor{vtwo}{RGB}{115,166,214}     
\definecolor{varith}{RGB}{209,122,46}    
\definecolor{vharm}{RGB}{237,179,115}    

\usepackage{subcaption}

\usepackage{hyperref}
\usepackage{cleveref}

\usepackage[a4paper,margin=1.1in]{geometry}

\usepackage{amsthm}

\newtheorem{theorem}{Theorem}[section]
\newtheorem{proposition}[theorem]{Proposition}
\newtheorem{lemma}[theorem]{Lemma}

\theoremstyle{definition}

\theoremstyle{remark}

\usepackage{booktabs} 

\usepackage{algorithm}
\usepackage{algorithmic}

\title{Fréchet Means of Periodic Orbits in Dynamical Systems:
Geometry, Dynamics, and Diagnostics}

\author{
Christian Kuehn\thanks{Technical University of Munich, Germany.
\href{mailto:christian.kuehn@tum.de}{\texttt{christian.kuehn@tum.de}}}
\and Paulina Sophia Hering\thanks{Technical University of Munich, Germany.
Supported by the TUM International Graduate School of Science and Engineering (IGSSE).
Corresponding author. \href{mailto:paulina.hering@tum.de}{\texttt{paulina.hering@tum.de}}
}}

\date{}

\begin{document}

\maketitle
\begin{abstract}
In many dynamical systems applications, one seeks representative trajectories summarizing families of periodic orbits arising from parameter variation or uncertainty. While the Fréchet mean provides a natural notion of averaging in nonlinear metric spaces, its application to periodic trajectories is complicated by the interplay between geometric shape and temporal dynamics. Inspired by ideas from shape analysis, we develop a framework for computing Fréchet means of periodic orbits by representing trajectories as closed curves and introducing a metric structure on a quotient space that accounts for circular phase shifts. Within this framework, we establish the existence of empirical Fréchet means in the resulting infinite-dimensional quotient space. A central finding is that the resulting Fréchet mean depends strongly on the chosen parametrization: time parametrization preserves dynamical information, whereas arc length parametrization emphasizes geometric structure. To reconcile these viewpoints, we propose a decoupled approach that computes a geometric Fréchet mean using arc length parametrization and subsequently reconstructs representative dynamics through harmonic averaging of aligned speed profiles. We further introduce curvature- and medoid-based diagnostic measures that quantify the representativeness of the resulting mean and identify situations in which averaging produces geometric artifacts or fails to capture heterogeneous ensembles. Numerical experiments for the Van der Pol oscillator, the Rosenzweig--MacArthur predator--prey model, and the Morris--Lecar neuronal model demonstrate that the proposed methodology yields robust geometric summaries together with consistent averaged dynamics. The framework provides a principled approach for constructing representative periodic trajectories and assessing their validity in uncertainty quantification and dynamical systems.
\end{abstract}

\section{Introduction}

Dynamical systems provide a mathematical framework for describing the evolution of states over time, typically through differential equations or discrete-time maps. A central goal in their analysis is to understand the qualitative behavior of solutions, which is often governed by invariant objects such as equilibria, periodic orbits, and more general invariant sets. These objects capture the long-term dynamics of the system and arise in applications ranging from climate science and ecology to neuroscience and engineering, see, for example, \cite{strogatz2024nonlinear, guckenheimer2013nonlinear, kuznetsov1998elements}. In many applications, however, invariant objects vary with parameters, initial conditions, or modeling assumptions. This naturally leads to the study of ensembles of invariant objects and raises the question of how such ensembles can be summarized by representative quantities. Such summaries provide compact descriptions of complex datasets, facilitate comparisons across parameter regimes, and can serve as representative dynamical signatures for visualization and analysis.

Uncertainty quantification (UQ) provides a framework for studying the impact of parameter variability and stochasticity on dynamical systems. Methods such as Monte Carlo sampling, sensitivity analysis, polynomial chaos expansions, and Bayesian inference enable the characterization of uncertainty in model outputs and dynamical quantities~\cite{sullivan2015introduction, smith2024uncertainty, le2010spectral, latifi2025survey}. More recently, polynomial chaos techniques have been extended to the computation of invariant objects under parameter uncertainty, including periodic orbits~\cite{breden2020computing,Breden} and bifurcations~\cite{KuehnLux,Luxetal,KuehnPiazzolaUllmann}. These developments allow ensembles of periodic solutions to be computed efficiently, but they do not address how such ensembles can be represented by a single representative trajectory.

For equilibria, this problem is straightforward: an ensemble can be summarized by the arithmetic mean of its locations. For periodic solutions, however, the situation is fundamentally more challenging. Parameter variations often generate families of limit cycles rather than isolated equilibria, as encountered in oscillatory climate phenomena, predator--prey dynamics, and neuronal models~\cite{jin1997equatorial, rosenzweig1963graphical, morris1981voltage}. Unlike equilibria, periodic trajectories are geometric objects whose representation depends on parametrization, phase alignment, and other symmetries. Consequently, averaging periodic orbits requires more than a pointwise averaging procedure. Naive averaging can distort essential geometric and dynamical features and may produce trajectories that are not representative of any observed behavior~\cite{ramsay1997functional, srivastava2016functional}.

A natural extension of averaging to nonlinear spaces is given by the Fréchet mean~\cite{Frechet1948}, also known as the Karcher mean on manifolds~\cite{karcher1977riemannian}. Defined as a minimizer of expected squared distance, it provides a notion of central tendency in metric spaces and has been widely used in shape analysis and geometric statistics~\cite{pennec2006intrinsic, bhattacharya2003large, afsari2011riemannian}. Viewing periodic trajectories as closed curves in state space, defined up to circular reparametrization, places the problem naturally in the setting of statistics on nonlinear and quotient spaces.

In this work, we investigate Fréchet means for periodic trajectories of dynamical systems. A central challenge lies in the interplay between geometry and dynamics. Trajectories should be compared independently of parametrization to capture their geometry, yet parametrization itself contains dynamical information such as speed variations and fast--slow structure. To address this challenge, we develop a framework inspired by shape analysis and introduce a metric on the space of closed curves that accounts for circular phase shifts. This induces a quotient-space structure in which empirical Fréchet means can be defined and computed, and for which we establish existence of empirical Fréchet means. We show that time parametrization preserves dynamical information but may distort geometric features, whereas arc length parametrization isolates geometry at the expense of dynamics. To reconcile these viewpoints, we propose a decoupled approach. First, we compute a geometric Fréchet mean using arc length parametrization. Second, we reconstruct dynamical information along this curve via harmonic averaging of speed profiles. The resulting trajectory combines geometric consistency with a meaningful dynamical interpretation.

A related question, particularly relevant in UQ settings, is whether the resulting mean is representative of the dataset. In heterogeneous ensembles, the Fréchet mean may fail to resemble any individual sample and may introduce artificial features. To quantify this effect, we introduce diagnostic measures based on curvature and medoid-type statistics that assess when averaging yields meaningful summaries and when it does not.

We illustrate the proposed framework using three dynamical systems with increasing geometric complexity. The Van der Pol oscillator highlights the influence of parametrization on the Fréchet mean. The Rosenzweig--MacArthur predator--prey model produces families of periodic orbits with varying amplitudes and scaling behavior. Finally, the Morris--Lecar model provides an example in which the Fréchet mean may fail to represent the dataset, motivating the use of the proposed diagnostic tools.


The remainder of this paper is organized as follows. In Section~\ref{sec:frechet mean in metric spaces} we review the definition of the Fréchet mean in general metric spaces and discuss existence and uniqueness results. Section~\ref{sec:a metric structure for closed curves} introduces the metric structure on the space of closed curves that serves as the geometric framework for representing periodic trajectories, defining their Fréchet mean, and establishing its existence. In Section~\ref{sec:computation of the empirical frechet mean}, we present the algorithm for computing empirical Fréchet means on the proposed metric space and establish its convergence properties. Section~\ref{sec:reconstructing dynamics} describes how dynamical information can be reconstructed from geometrically averaged curves. Section~\ref{sec:diagnostics} introduces diagnostic quantities for assessing the representativeness of the resulting geometric Fréchet mean. Sections~\ref{sec:vdp}--\ref{sec:morris lecar model} demonstrate the framework on three example systems. Finally, Section~\ref{sec:discussion} discusses implications, limitations, and future directions. The implementation used for the numerical experiments is publicly available.\footnote{
Code and GIF animations are available at
\url{https://github.com/paulinash/FrechetMeanPaper}.
}

\section{Fr\'echet Mean in Metric Spaces}\label{sec:frechet mean in metric spaces}

To formalize the notion of an 'average' trajectory in a non-Euclidean setting, we consider the concept of the Fréchet mean in general metric spaces. This notion provides a natural extension of the classical Euclidean mean to situations where no linear structure is available. Fréchet means are widely used in areas such as shape analysis, statistics on manifolds, and geometric data analysis, where data points are given as objects like curves, surfaces, or probability distributions rather than vectors~\cite{srivastava2016functional, pennec2006intrinsic, molchanov2005theory}. This makes it a natural tool for studying spaces of trajectories, which we later model as closed curves equipped with a suitable metric. However, the Fréchet mean is not (yet) a common tool in dynamical systems, particularly at the interaction with forward uncertainty quantification, where random differential equations are naturally generating families of periodic orbits.

Consider a probability space $(\Omega, \mathcal{F}, \mathbb{P})$ and a metric space $(\mathcal{C}, d)$. Let $X: \Omega \to \mathcal{C}$ be a random element such that $\mathbb{E}[d(c_0, X)^2] < \infty$ for some $c_0 \in \mathcal{C}$. The associated Fréchet functional is defined by
\[
F(c) := \mathbb{E}[d(c, X)^2], \qquad c \in \mathcal{C}.
\]
The Fréchet mean of $X$ is defined as the (possibly non-unique) set of minimizers of $F$, i.e.,
\(
\arg\min_{c \in \mathcal{C}} F(c).
\)
Any element $\mu_F$ of this set is called a Fréchet mean of $X$. In general, the Fr\'echet mean need not be unique, and the minimizing set may be empty if the infimum is not attained. The Fréchet variance is defined as the minimal value of the functional,
\[
\mathrm{Var}_F := \inf_{c \in \mathcal{C}} F(c).
\]
If a Fréchet mean $\mu_F$ exists, then $\mathrm{Var}_F = F(\mu_F) = \mathbb{E}[d(\mu_F, X)^2]$. The concept was first introduced in~\cite{Frechet1948} and we mainly follow~\cite{molchanov2005theory} for definitions and theoretical background. In practice, one typically works with finite samples rather than distributions. This leads to the notion of an empirical Fréchet mean.
Let $\gamma_1, \ldots, \gamma_N \in \mathcal{C}$ be sample points. The empirical Fr\'echet mean is defined as any minimizer
\[
\mu_F^{(N)} \in \arg\min_{c \in \mathcal{C}} \frac{1}{N} \sum_{k=1}^{N} d(c, \gamma_k)^2.
\]
The corresponding empirical Fr\'echet variance is given by
\(
\mathrm{Var}_F^{(N)} = \inf_{c \in \mathcal{C}} \frac{1}{N} \sum_{k=1}^{N} d(c, \gamma_k)^2,
\)
and, if a minimizer $\mu_F^{(N)}$ exists, it satisfies
\(
\mathrm{Var}_F^{(N)} = \frac{1}{N} \sum_{k=1}^{N} d(\mu_F^{(N)}, \gamma_k)^2.
\)

\subsection{Existence and Uniqueness of Fréchet Means}

The existence and uniqueness of Fréchet means depend strongly on the geometry
of the underlying metric space and we briefly summarize the key results for general metric spaces and Hilbert spaces.

\paragraph{Existence and uniqueness in general metric spaces}
Let $(\mathcal{C}, d)$ be a metric space and let $X$ be a random variable
with $\mathbb{E}[d(c_0,X)^2] < \infty$ for some $c_0 \in \mathcal{C}$. Then the Fréchet
functional
\(
F(c) = \mathbb{E}[d(c,X)^2]
\)
is well-defined. However, a minimizer need not exist in general, as the infimum may not be attained~\cite{molchanov2005theory}. A sufficient condition for existence is compactness of the space $\mathcal{C}$, which ensures that the Fréchet functional attains its minimum. Even when a Fréchet mean exists, uniqueness is not guaranteed in general metric spaces. Additional geometric conditions are required, such as nonpositive curvature, which induce convexity of the squared distance functional and thereby ensures uniqueness~\cite{pennec2006intrinsic}.

\paragraph{Existence and uniqueness in Hilbert spaces}
Let $\mathcal{H}$ be a Hilbert space and $X$ a random variable with $\mathbb{E}\|X\|^2 < \infty$. The Fréchet functional
\(
F(c) = \mathbb{E}[\|c - X\|^2]
\)
admits the decomposition
\begin{align*}
   F(c) &= \mathbb{E}[\|c - X\|^2] = \|c\|^2 - 2 \langle c,\mathbb{E}[X] \rangle + \mathbb{E}\|X\|^2\\
   &= \| c - \mathbb{E}[X] \|^2 - \| \mathbb{E}[X]\|^2 + \mathbb{E}\|X\|^2\\
   &=\|c - \mathbb{E}[X]\|^2 + \mathbb{E}[\|X - \mathbb{E}[X]\|^2].
\end{align*}
Since the second term is independent of $c$, the functional is minimized at $c = \mathbb{E}[X]$. In particular, $F$ is strictly convex, and therefore admits a unique minimizer. This characterization of the expectation as the minimizer of the mean squared distance is classical in Hilbert spaces~\cite{molchanov2005theory}. For a finite sample $\gamma_1,\dots,\gamma_N \in \mathcal{H}$, the empirical Fréchet functional satisfies an analogous identity,
\begin{align*}
   F^{(N)}(c) &= \frac{1}{N}\sum_{k=1}^N \|c - \gamma_k\|^2 \\
    &= \|c - \frac{1}{N}\sum_{k=1}^N \gamma_k\|^2 +  \frac{1}{N}\sum_{k=1}^N \| \gamma_k - \frac{1}{N}\sum_{k=1}^N \gamma_k \|^2 \\
    &=  \|c - \frac{1}{N}\sum_{k=1}^N \gamma_k\|^2 +  \mathrm{Var}_F^{(N)}
\end{align*}
so the unique minimizer is given by the arithmetic mean
\(
\mu_F^{(N)} = \frac{1}{N}\sum_{k=1}^N \gamma_k.
\)Thus, in Hilbert spaces, the Fréchet mean coincides with the classical Euclidean average, reflecting the underlying linear and inner product structure. 

In contrast, the spaces of trajectories considered in this work do not directly admit such a structure, as curves are defined only up to transformations such as phase shifts. Similar challenges arise in shape analysis, where one studies curves modulo reparametrization or other group actions. Motivated by these ideas, we model periodic trajectories as elements of a quotient space that accounts for phase invariance while preserving physically meaningful properties. This quotient space inherits a metric structure from an underlying Hilbert space but is itself no longer linear. As a consequence, the existence, uniqueness, and interpretation of Fréchet means become significantly more subtle and depend sensitively on the chosen metric. 


\section{A Metric Structure for Closed Curves}\label{sec:a metric structure for closed curves}

In order to define Fréchet means for periodic trajectories, we introduce a metric structure on the space of closed curves. The construction is guided by the general framework of group actions and quotient spaces commonly used in shape analysis~\cite{srivastava2016functional}, but we deliberately restrict invariances to preserve physically meaningful properties such as location, timing, and velocity. Our approach proceeds in several steps. First, we define the space of closed curves representing periodic trajectories. Second, we introduce a circular reparametrization group acting on this space, leading to a quotient space that captures phase invariance along the orbit. Third, we equip this quotient space with a phase-aligned distance, which induces the metric structure used to define Fréchet means. We then introduce arc length parametrization as an alternative to time normalization. Finally, we discuss existence and uniqueness properties of Fréchet means in the resulting quotient space.

\paragraph{Space of Closed Curves}
We model closed curves as elements of the Hilbert space
\(
\mathcal{H} := L^2(\mathcal{S}^1, \mathbb{R}^d),
\)
where $\mathcal{S}^1 \cong [0,1]/(0\sim 1) $ denotes the unit circle. Elements of $\mathcal{H}$ are equivalence classes of square-integrable functions on the circle, which we interpret as periodic curves up to sets of measure zero. The space $\mathcal{H}$ is equipped with the inner product $\langle \cdot, \cdot \rangle_{L^2}$ and induced norm $\| \cdot \|_{L^2}$, i.e.
\[\langle \gamma_1, \gamma_2 \rangle_{L^2} = \int_{\mathcal{S}^1} \gamma_1(s) \cdot \gamma_2(s) \, \mathrm{d}s \quad \mathrm{and} \quad  \|\gamma\|_{L^2} = \sqrt{\langle \gamma, \gamma \rangle_{L^2}} =\left( \int_{\mathcal{S}^1} \|\gamma(s)\|^2 \, \mathrm{d}s \right)^{1/2},\] where $\| \cdot \|$ denotes the Euclidean norm on $\mathbb{R}^d.$
The associated metric is given by
\[
d_0(\gamma_1,\gamma_2) = \|\gamma_1 - \gamma_2\|_{L^2}.
\]
In applications, closed curves arise as periodic trajectories defined on time intervals of varying lengths. Let
\(\gamma : [t_1, t_1 + T_1] \to \mathbb{R}^d\)
be a continuous periodic curve with period $T_1 > 0$. We define time normalized parametrizations on $\mathcal{S}^1$ by
\begin{align}\label{eq: time normalization}
    \tilde{\gamma}(s) := \gamma(t_1 + sT_1), \quad s \in \mathcal{S}^1.
\end{align}
In the following, we identify periodic trajectories with their normalized representatives in $\mathcal{H}$.
While the metric structure is defined on $\mathcal{H} = L^2(\mathcal{S}^1,\mathbb{R}^d)$,
certain geometric constructions introduced in later sections require additional smoothness. In particular,
arc length parametrization requires continuously differentiable curves, and
curvature requires twice continuously differentiable curves. To this end, we consider the subspaces
\[
C^1(\mathcal{S}^1,\mathbb{R}^d) \subset \mathcal{H}, \qquad
C^2(\mathcal{S}^1,\mathbb{R}^d) \subset C^1(\mathcal{S}^1,\mathbb{R}^d).
\] These spaces consist of periodic functions on $\mathcal{S}^1$ whose derivatives match at
the endpoints, and can be identified with $C^k$ functions on the circle $\mathcal{S}^1$.

\paragraph{Quotient Space by Circular Reparametrization}
Closed curves do not have a canonical starting point. 
To remove this phase ambiguity, we consider circular shifts of the parameter domain. Let $G := \mathcal{S}^1$ denote the set of all shifts $\tau \in \mathcal{S}^1$. For $\tau \in G$ and $\gamma \in \mathcal{H}$, define the shifted curve
\[(\tau \cdot \gamma)(s) := \gamma(s + \tau),\] where addition is understood modulo $1$ on $\mathcal{S}^1$. This operation simply changes the starting point of the curve without altering its geometric shape.

\begin{lemma}
The mapping $(\tau, \gamma) \mapsto \tau \cdot \gamma$ defines a continuous group action on $\mathcal{H}=L^2(\mathcal{S}^1,\mathbb{R}^d)$ that preserves the $L^2$-distance $d_0$.
\end{lemma}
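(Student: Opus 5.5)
The proof splits into three parts: the group action axioms (including well-definedness on equivalence classes), isometry, and joint continuity. Only the last requires a real argument.

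\textbf{Well-definedness and group axioms.} For fixed $\tau$, the map $s\mapsto s+\tau$ (mod $1$) is a measure-preserving bijection of $\mathcal{S}^1$ with respect to Lebesgue measure, since Lebesgue measure on the circle is translation invariant. Hence if $\gamma=\tilde\gamma$ almost everywhere, then $\tau\cdot\gamma=\tau\cdot\tilde\gamma$ almost everywhere, so the action is well defined on $L^2$-classes. Translation invariance also gives $\|\tau\cdot\gamma\|_{L^2}=\|\gamma\|_{L^2}$, so $\tau\cdot\gamma\in\mathcal{H}$. The axioms are then direct computations: $0\cdot\gamma=\gamma$, and
\[
(\tau_1\cdot(\tau_2\cdot\gamma))(s)=(\tau_2\cdot\gamma)(s+\tau_1)=\gamma(s+\tau_1+\tau_2)=((\tau_1+\tau_2)\cdot\gamma)(s).
\]
Since $G$ is abelian, the order of composition causes no issue.

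\textbf{Isometry.} Each $\tau\cdot$ is linear. Applying the norm identity above to $\gamma_1-\gamma_2$ gives $d_0(\tau\cdot\gamma_1,\tau\cdot\gamma_2)=d_0(\gamma_1,\gamma_2)$. This follows from the change of variables $u=s+\tau$ on the circle.

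\textbf{Joint continuity.} This is the main step. Take $\tau_n\to\tau$ and $\gamma_n\to\gamma$ in $\mathcal{H}$. Using the triangle inequality and the isometry,
\[
\|\tau_n\cdot\gamma_n-\tau\cdot\gamma\|\le \|\tau_n\cdot(\gamma_n-\gamma)\|+\|\tau_n\cdot\gamma-\tau\cdot\gamma\| = \|\gamma_n-\gamma\|+\|(\tau_n-\tau)\cdot\gamma-\gamma\|,
\]
where the last equality applies the isometry $(-\tau)\cdot$ to the second term. It therefore suffices to prove continuity of translation in $L^2$: $\|h\cdot\gamma-\gamma\|_{L^2}\to0$ as $h\to0$. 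I would prove this by density. Continuous functions on $\mathcal{S}^1$ are dense in $L^2(\mathcal{S}^1,\mathbb{R}^d)$. Given $\varepsilon>0$, choose a continuous $\phi$ with $\|\gamma-\phi\|<\varepsilon$. Then
\[
\|h\cdot\gamma-\gamma\|\le\|h\cdot(\gamma-\phi)\|+\|h\cdot\phi-\phi\|+\|\phi-\gamma\|\le 2\varepsilon+\sup_s\|\phi(s+h)-\phi(s)\|.
\]
The last term tends to $0$ as $h\to 0$ because $\phi$ is uniformly continuous on the compact circle. This yields joint continuity.
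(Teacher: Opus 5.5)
Your proof is correct and follows the same structure as the paper's: you check the action axioms, obtain the isometry by the change of variables $u=s+\tau$, and deduce continuity from the continuity of translations in $L^2$. The only difference is that the paper simply asserts this last fact, whereas you prove it by density of continuous functions and make explicit how joint continuity in $(\tau,\gamma)$ reduces to continuity of translation at the identity. You also check well-definedness on almost-everywhere classes, which the paper leaves implicit.
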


\begin{proof}
The identity element $0 \in G$ satisfies $0 \cdot \gamma = \gamma$, and for $\tau_1, \tau_2 \in G$,
\(
\tau_1 \cdot (\tau_2 \cdot \gamma) = (\tau_1 + \tau_2) \cdot \gamma,
\)
so this defines a group action. For $\tau \in G$, a change of variables yields
\[
d_0(\tau \cdot \gamma_1, \tau \cdot \gamma_2)^2
= \int_0^1 \|\gamma_1(s+\tau) - \gamma_2(s+\tau)\|^2 \mathrm{d}s
= d_0(\gamma_1, \gamma_2)^2,
\]
so the action preserves distances. Continuity follows since shifts are continuous in $L^2$.
\end{proof}
We consider curves equivalent if they differ only by such a shift, i.e.
\[
\gamma_1 \sim \gamma_2 \;\Longleftrightarrow\; \gamma_1 = \tau \cdot \gamma_2 \text{ for some } \tau \in G.
\]
The corresponding quotient space is
\(\mathcal{C} := \mathcal{H} / \mathcal{S}^1 = L^2(\mathcal{S}^1,\mathbb{R}^d)/\mathcal{S}^1,\)
whose elements represent closed curves modulo their starting point. Throughout, we denote elements of $\mathcal{H}$ by $\gamma$ or $c$, and write $[\gamma]$ or $[c]$ for their equivalence classes in $\mathcal{C} = \mathcal{H} / \mathcal{S}^1$. We consistently use the bracket notation when emphasizing membership in the quotient space.

\paragraph{Phase-Aligned Distance}
To compare curves up to circular reparametrization, we define
\[
d([\gamma_1], [\gamma_2]) := \min_{\tau \in \mathcal{S}^1} d_0(\gamma_1, \tau \cdot \gamma_2),
\] where the right-hand side is defined via representatives. The proposition below shows it is well-defined on equivalence classes and yields a metric on $\mathcal{C}.$

\begin{proposition}
The minimum is attained and $d$ defines a metric on the quotient space $\mathcal{C}$.
\end{proposition}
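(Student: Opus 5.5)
The plan is to derive everything from the lemma above: the shift action is continuous and isometric for $d_0$, and $G=\mathcal{S}^1$ is a compact group. Attainment of the minimum comes from compactness, well-definedness and symmetry from isometry, the triangle inequality from composing optimal shifts, and definiteness from closedness of orbits. I expect that last step to be the main obstacle.

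\textbf{Attainment of the minimum.} Fix representatives $\gamma_1,\gamma_2$ and consider $f(\tau):=d_0(\gamma_1,\tau\cdot\gamma_2)$ on $\mathcal{S}^1$. By the reverse triangle inequality,
\[
|f(\tau)-f(\tau')|\le \|\tau\cdot\gamma_2-\tau'\cdot\gamma_2\|_{L^2}.
\]
Since $\tau\mapsto\tau\cdot\gamma_2$ is continuous into $L^2$ by the lemma, $f$ is continuous on the compact set $\mathcal{S}^1$, so the minimum is attained.

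\textbf{Well-definedness and symmetry.} Replace $\gamma_1$ by $\sigma\cdot\gamma_1$ and $\gamma_2$ by $\rho\cdot\gamma_2$. By isometry,
\[
d_0(\sigma\cdot\gamma_1,\tau\cdot\rho\cdot\gamma_2)=d_0(\gamma_1,(\tau+\rho-\sigma)\cdot\gamma_2).
\]
As $\tau$ ranges over $\mathcal{S}^1$, so does $\tau+\rho-\sigma$, so the minimum is unchanged. Symmetry follows the same way, from $d_0(\gamma_1,\tau\cdot\gamma_2)=d_0((-\tau)\cdot\gamma_1,\gamma_2)$. Nonnegativity is inherited from $d_0$, and $d([\gamma],[\gamma])=0$ by taking $\tau=0$.

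\textbf{Triangle inequality.} Choose optimal shifts $\tau_1$ for the pair $(\gamma_1,\gamma_2)$ and $\tau_2$ for the pair $(\gamma_2,\gamma_3)$. Then
\[
d([\gamma_1],[\gamma_3])\le d_0(\gamma_1,(\tau_1+\tau_2)\cdot\gamma_3)\le d_0(\gamma_1,\tau_1\cdot\gamma_2)+d_0(\tau_1\cdot\gamma_2,\tau_1\cdot\tau_2\cdot\gamma_3).
\]
By isometry, the last term equals $d_0(\gamma_2,\tau_2\cdot\gamma_3)$, which gives the inequality.

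\textbf{Definiteness (the main obstacle).} In a general quotient pseudometric, distance zero between classes need not imply equality, because orbits may fail to be closed. Here the attained minimum resolves this. If $d([\gamma_1],[\gamma_2])=0$, the minimizer $\tau^*$ from the first step satisfies $\|\gamma_1-\tau^*\cdot\gamma_2\|_{L^2}=0$. Hence $\gamma_1=\tau^*\cdot\gamma_2$ as elements of $L^2$, which is exactly the statement $[\gamma_1]=[\gamma_2]$.

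One subtlety remains: elements of $\mathcal{H}$ are a.e.-classes, so I will check that the shift is well defined on them. This holds because translation on $\mathcal{S}^1$ preserves Lebesgue measure, so it maps null sets to null sets.
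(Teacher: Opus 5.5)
Your proof is correct and follows the same route as the paper: continuity of $\tau \mapsto d_0(\gamma_1,\tau\cdot\gamma_2)$ together with compactness of $\mathcal{S}^1$ gives attainment, shift-invariance of $d_0$ gives well-definedness, and the attained minimizer $\tau^*$ gives definiteness. Your explicit checks of symmetry and of the triangle inequality (by composing optimal shifts) fill in steps that the paper only asserts as following from the corresponding properties of $d_0$.
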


\begin{proof}
For fixed $\gamma_1, \gamma_2 \in \mathcal{H}$, the function
\(
\tau \mapsto d_0(\gamma_1, \tau \cdot \gamma_2)
\)
is continuous, and $\mathcal{S}^1$ is compact, hence the minimum exists.
The invariance of $d_0$ under the group action implies that $d$ depends only on equivalence classes.
Non-negativity, symmetry, and the triangle inequality follow from the corresponding properties of $d_0$.
If $d([\gamma_1], [\gamma_2]) = 0$, then there exists a $\tau^*$ such that $d_0(\gamma_1,\tau^* \cdot \gamma_2) = 0$, as the minimum is attained. Thus  $\gamma_1(s) = (\tau^* \cdot \gamma_2)(s)$ a.e. and the equivalence classes coincide.
\end{proof}
Thus, we obtain a metric space $(\mathcal{C}, d)$, where $\mathcal{C}$ is the quotient of the Hilbert space $\mathcal{H} = L^2(\mathcal{S}^1,\mathbb{R}^d)$ under circular shifts, enabling comparison of periodic trajectories independently of their starting point.
A visualization of the circular shifts operation and resulting effects on the naive pointwise mean are shown in Figure~\ref{fig:phase_alignment}.

\begin{figure}[h]
\centering

\begin{subfigure}[b]{0.37\linewidth}
    \centering
    \includegraphics[width=\linewidth]{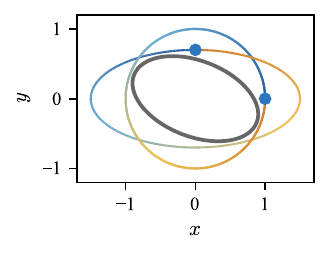}
    \caption{Unaligned phase}
\end{subfigure}
\begin{subfigure}[b]{0.37\linewidth}
    \centering
    \includegraphics[width=\linewidth]{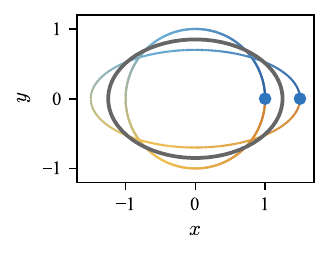}
    \caption{Aligned phase}
\end{subfigure}

\caption{Effect of phase alignment on pointwise averaging of closed curves. (a) Two periodic curves with unaligned parametrizations produce a distorted and geometrically unnatural pointwise average  (grey) in $L^2$. (b) After optimal alignment by circular phase shifts using the metric $d$, the resulting average curve preserves the underlying geometry much more faithfully. This illustrates the importance of phase alignment in the definition of the quotient-space metric on $\mathcal{C}$.}
\label{fig:phase_alignment}
\end{figure}

\paragraph{Arc Length Parametrization}
In addition to the time normalization introduced in \eqref{eq: time normalization}, we consider an arc length parametrization. While time parametrization preserves local speeds along the curve and thus reflects the underlying dynamics, arc length parametrization enforces constant speed and removes variability due to parametrization, isolating the intrinsic geometric structure of periodic trajectories~\cite{srivastava2016functional}.
Arc length parametrization also appears in dynamical systems, for instance in pseudo-arclength continuation methods, where solution branches are parametrized by arc length to allow continuation through folds and turning points~\cite{allgower2012numerical, kuznetsov1998elements}. This highlights its role as a geometrically intrinsic parametrization that avoids degeneracies of non-uniform parametrizations. In our setting, arc length parametrization provides a natural way to define a geometric Fréchet mean of periodic trajectories, which can later be complemented by a separate reconstruction of dynamical information in Section~\ref{sec:reconstructing dynamics}. In particular, it defines an alternative to the proposed time normalization, allowing us to compare how different parametrizations influence the resulting Fréchet mean. 
Let $\gamma : [t_1, t_1 + T_1] \to \mathbb{R}^d$ be a continuously differentiable regular closed curve with $\|\dot{\gamma}(t)\| \neq 0$ and define the cumulative arc length  $l$ and the resulting total arc length $L$
\[l(t) = \int_{t_1}^{t} \|\dot{\gamma}(\tau)\| d\tau, \quad L = l(t_1 + T_1),\]
and the normalized arc length parameter
$s(t) = l(t)/L\in \mathcal{S}^1.$
Since $l$ is strictly increasing, it admits an inverse, and we define
\[\overline{\gamma}(s) := \gamma\bigl(l^{-1}(sL)\bigr), \quad s \in \mathcal{S}^1.\]
Thus, $\overline{\gamma}$ yields an arc length parametrization of the initial curve $\gamma$, tracing the same geometric curve with constant speed. In the following, whenever arc length parametrization is used, we implicitly assume that curves lie in $C^1(\mathcal{S}^1,\mathbb{R}^d)$.

\subsection{Existence and Uniqueness in the Quotient Space}
We now analyze existence and uniqueness of the empirical Fréchet mean 
in the quotient space $\mathcal{C} = L^2(\mathcal{S}^1,\mathbb{R}^d)/\mathcal{S}^1$. 
The space $\mathcal{C}$ is not locally compact, since bounded sets in the 
infinite-dimensional space $\mathcal{H}$ are not norm-compact. Standard 
compactness arguments for the existence of minimizers of the empirical 
Fréchet functional
\[
F^{(N)}([c]) = \frac{1}{N} \sum_{k=1}^N d([c],[\gamma_k])^2
\]
therefore do not apply directly. To overcome this, we lift the minimization 
problem to the ambient Hilbert space $\mathcal{H}$, replacing 
$\min_{[c] \in \mathcal{C}} F^{(N)}([c])$ by
\[
\min_{c \in \mathcal{H}} F^{(N)}(c), \qquad F^{(N)}(c) = \frac{1}{N} 
\sum_{k=1}^N \min_{\tau \in \mathcal{S}^1} \|c - \tau \cdot \gamma_k\|_{L^2}^2.
\]
The two problems are equivalent: since $F^{(N)}(c) = F^{(N)}(\tau \cdot c)$ 
for all $\tau \in \mathcal{S}^1$, the lifted functional depends only on the 
equivalence class $[c] \in \mathcal{C}$. However, the inner minimization 
over $\tau$ destroys the quadratic structure of $F^{(N)}$, so standard 
convexity arguments in Hilbert spaces do not apply either. We therefore 
proceed via the direct method in the calculus of 
variations~\cite{dacorogna2008direct, brezis2011functional}.

\begin{proposition}\label{prop:existence minimizer}
The empirical Fréchet functional $F^{(N)}\colon \mathcal{H} \to \mathbb{R}$,
\[
    F^{(N)}(c) = \frac{1}{N}\sum_{k=1}^N 
    \min_{\tau \in \mathcal{S}^1}\|c - \tau\cdot\gamma_k\|_{L^2}^2,
\]
admits a minimizer $c^* \in \mathcal{H}$. Its equivalence class 
$[c^*] \in \mathcal{C}$ is an empirical Fréchet mean.
\end{proposition}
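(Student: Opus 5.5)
The plan is to apply the direct method of the calculus of variations in the Hilbert space $\mathcal{H}$, with the weak topology supplying the compactness that the norm topology lacks. I first expand each summand. Since the group action preserves norms (it is an isometry by the Lemma above, so $\|\tau\cdot\gamma_k\|_{L^2}=\|\gamma_k\|_{L^2}$), I can write
\[
\min_{\tau\in\mathcal{S}^1}\|c-\tau\cdot\gamma_k\|_{L^2}^2
= \|c\|_{L^2}^2 + \|\gamma_k\|_{L^2}^2 - 2\max_{\tau\in\mathcal{S}^1}\langle c,\tau\cdot\gamma_k\rangle_{L^2}.
\]
This gives
\[
F^{(N)}(c)=\|c\|_{L^2}^2 - \frac{2}{N}\sum_{k=1}^N \phi_k(c) + \frac1N\sum_{k=1}^N\|\gamma_k\|_{L^2}^2,
\qquad \phi_k(c):=\max_{\tau\in\mathcal{S}^1}\langle c,\tau\cdot\gamma_k\rangle_{L^2}.
\]
The maximum defining $\phi_k$ is attained, by the same continuity and compactness argument as in the proof that $d$ is a metric.

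\emph{Step 1 (coercivity).} By the reverse triangle inequality and the isometry property,
\[
\|c-\tau\cdot\gamma_k\|_{L^2}\ge \|c\|_{L^2}-\|\gamma_k\|_{L^2}.
\]
Hence $F^{(N)}(c)\ge \frac1N\sum_k(\|c\|_{L^2}-\|\gamma_k\|_{L^2})_+^2\to\infty$ as $\|c\|_{L^2}\to\infty$. Also, $F^{(N)}\ge 0$ is finite everywhere. So any minimizing sequence $(c_n)$ is bounded in $\mathcal{H}$. Since $\mathcal{H}$ is a Hilbert space, and hence reflexive, we may pass to a subsequence with $c_n\rightharpoonup c^*$ weakly in $\mathcal{H}$.

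\emph{Step 2 (weak lower semicontinuity).} This is the step I expect to be the main obstacle, because the inner minimization destroys convexity. The term $\|c\|_{L^2}^2$ is weakly lower semicontinuous, so it suffices to show that each $\phi_k$ is weakly sequentially continuous along bounded sequences. The key observation is that the orbit $K_k:=\{\tau\cdot\gamma_k:\tau\in\mathcal{S}^1\}$ is norm-compact in $\mathcal{H}$. This holds because it is the image of the compact set $\mathcal{S}^1$ under the continuous map $\tau\mapsto\tau\cdot\gamma_k$, which is continuous by the Lemma. For a bounded weakly convergent sequence, $\langle c_n,h\rangle\to\langle c^*,h\rangle$ uniformly for $h$ in a norm-compact set. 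To see this, cover $K_k$ by a finite $\varepsilon$-net, use pointwise convergence on the net, and control the remainder using $\sup_n\|c_n\|_{L^2}$. Uniform convergence on $K_k$ then gives
\[
\phi_k(c_n)=\max_{h\in K_k}\langle c_n,h\rangle \longrightarrow \max_{h\in K_k}\langle c^*,h\rangle=\phi_k(c^*).
\]
Consequently $F^{(N)}(c^*)\le\liminf_n F^{(N)}(c_n)=\inf_{\mathcal{H}}F^{(N)}$, so $c^*$ is a minimizer.

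\emph{Step 3 (passage to the quotient).} By definition of the phase-aligned distance $d$, we have $F^{(N)}(c)=\frac1N\sum_k d([c],[\gamma_k])^2$ for every $c\in\mathcal{H}$. This quantity depends only on $[c]$, as noted before the proposition. Since every class in $\mathcal{C}$ has a representative in $\mathcal{H}$, the infima over $\mathcal{H}$ and over $\mathcal{C}$ coincide. Thus $[c^*]$ attains $\min_{[c]\in\mathcal{C}}F^{(N)}([c])$ and is an empirical Fréchet mean.
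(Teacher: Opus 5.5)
Your proof is correct. It shares the paper's skeleton: a bounded minimizing sequence, a weak subsequential limit by reflexivity, weak lower semicontinuity resting on norm-compactness of the orbits $\mathcal{O}_k$, and passage to the quotient.

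The difference is in the lower-semicontinuity step. The paper treats each summand as $\mathrm{dist}(c,\mathcal{O}_k)^2$. It chooses nearest points $h_j\in\mathcal{O}_k$, extracts a subsequence $h_j\to h^*$, and applies weak lower semicontinuity of the norm to $c_{n_j}-h_j\rightharpoonup c^*-h^*$. You instead expand the square to get $F^{(N)}(c)=\|c\|_{L^2}^2-\frac{2}{N}\sum_k\phi_k(c)+\mathrm{const}$, with $\phi_k$ the support function of $\mathcal{O}_k$. You then show that $\phi_k$ is weakly sequentially continuous, because bounded weakly convergent sequences converge uniformly on norm-compact sets.

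Each route buys something different. The paper's argument is slightly more economical: it would go through with $h_j\rightharpoonup h^*$ only weakly, so it really needs only weak compactness of the set whose distance is taken. Yours uses norm compactness essentially, but it returns more:
\begin{itemize}
\item It exhibits $F^{(N)}$ as a coercive convex quadratic minus a weakly sequentially continuous convex function, which makes the source of non-convexity transparent.
\item Since $F^{(N)}(c_n)\to F^{(N)}(c^*)$ and $\phi_k(c_n)\to\phi_k(c^*)$, it forces $\|c_n\|_{L^2}\to\|c^*\|_{L^2}$. Your weakly convergent minimizing subsequence therefore converges strongly in $\mathcal{H}$, which the paper's proof does not state.
\end{itemize}
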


\begin{proof}
    We apply the direct method in the calculus of variations, proceeding in three steps.

    \medskip
    \textit{Step 1: Minimizing sequence and weak convergence:}
    Since $F^{(N)}\geq 0$, the functional is bounded below and admits a minimizing sequence $(c_n) \subset \mathcal{H}$ with $F^{(N)} (c_n) \leq M < \infty$ for all $n$. For fixed $k=1$, the map $\tau \mapsto \|c_n - \tau \cdot \gamma_1\|_{L^2}^2$ is continuous on the compact set $\mathcal{S}^1$, so its minimum is attained at some $\tau_n \in \mathcal{S}^1$. The triangle inequality and $\mathcal{S}^1$-invariance of the $L^2$-norm then give 
    \[ \|c_n\|_{L^2} \leq \|c_n - \tau_n \cdot \gamma_1\|_{L^2} + \|\tau_n \cdot \gamma_1\|_{L^2} \leq \sqrt{NM} + \|\gamma_1\|_{L^2}.\]
    Hence $(c_n)$ is bounded in $\mathcal{H}$, and by reflexivity a subsequence satisfies $c_{n_j} \rightharpoonup c^*$ weakly in $\mathcal{H}$.

    \medskip
    \textit{Step 2: Weak lower semicontinuity:} For each $k=1,\dots,N$, define
    \[\phi_k(c) = \min_{\tau \in \mathcal{S}^1} \|c - \tau \cdot \gamma_k\|_{L^2}^2\ = \mathrm{dist}(c,\mathcal{O}_k)^2,\] where $\mathcal{O}_k=\{\tau \cdot \gamma_k : \tau \in \mathcal{S}^1\}$ is the orbit of $\gamma_k$ under the group action. We show that $\phi_k$ is weakly lower semicontinuous on $\mathcal{H}$. Let $c_n \rightharpoonup c^*$ weakly in $\mathcal{H}$. Choose a subsequence $(c_{n_j})$ such that $\phi_k(c_{n_j}) \to \liminf_n \phi_k(c_n)$. In particular, $c_{n_j}$ also converges weakly. For each $j$, let $\tau_j \in \mathcal{S}^1$  be a minimizer for $\| c_{n_j} - \tau \cdot \gamma_k\|_{L^2}^2$, which exists by compactness of $\mathcal{S}^1$ and continuity. Defining $h_j = \tau_j \cdot \gamma_k\in \mathcal{O}_k$ then yields
    \[\|c_{n_j} - h_j\|_{L^2}^2 = \phi_k(c_{n_j}).\]The orbit $\mathcal{O}_k$ is compact in $\mathcal{H}$ since it is the continuous image of the compact set $\mathcal{S}^1$. This admits a convergent subsequence (reindexed) $h_j \to h^*\in \mathcal{O}_k$ strongly in $\mathcal{H}$. Since $c_{n_{j}} \rightharpoonup c^*$ weakly and $h_{j} \to h^*$ strongly, their difference satisfies
    \[c_{n_{j}} - h_{j}  \rightharpoonup c^* - h^* \quad \text{weakly in } \mathcal{H}.\]
    The squared $L^2$-norm is convex and strongly continuous and thus weakly lower semicontinuous. This yields
    \[\phi_k(c^*) \leq \|c^* - h^* \|_{L^2}^2 \leq \liminf_j \|c_{n_{j}} - h_{j}\|_{L^2}^2 = \liminf_j \phi_k(c_{n_{j}}) = \liminf_n \phi_k(c_n),\]
     i.e. $\phi_k$ is weakly lower semicontinuous. As a weighted finite sum, the Fréchet functional $F^{(N)}(c)$ is thus also weakly lower semicontinuous, i.e. 
    \(F^{(N)} (c^*) \leq \liminf_n F^{(N)} (c_n).\)

    \medskip
    \textit{Step 3: Existence of minimizer:} Applying weak lower semicontinuity of $F^{(N)}$ to the minimizing sequence from Step 1, we obtain
    \[F^{(N)}(c^*) \;\leq\; \liminf_{j\to\infty} F^{(N)}(c_{n_j}) \;=\; \inf_{c\in\mathcal{H}} F^{(N)}(c).\]
    Since $F^{(N)}(c^*) \geq \inf_{c \in \mathcal{H}} F^{(N)}(c)$ holds trivially, we conclude $F^{(N)}(c^*) =\inf_{c\in\mathcal{H}} F^{(N)}(c)$, so $c^*$ is a minimizer. Its equivalence class $[c^*] \in \mathcal{C}$ is an empirical Fréchet mean.
\end{proof}
The proof follows the same direct-method strategy as the metric-space existence results of \cite{bhattacharya2003large}: extract a minimizing sequence, show it is bounded, and obtain a convergent subsequence from which existence follows by (semi-)continuity of the functional. But the present setting differs in two essential ways. 
First, since $\mathcal{H}$ is infinite-dimensional, norm-compactness of bounded sets fails; compactness of the minimizing sequence is therefore replaced by weak sequential compactness via reflexivity of $\mathcal{H}$. Second, continuity of the Fréchet functional is replaced by weak lower semicontinuity, which holds here because each summand is a squared distance to a compact orbit. Existence and uniqueness of Fréchet means have also been studied on Riemannian manifolds using geodesic convexity and curvature bounds \cite{karcher1977riemannian, kendall1990probability, pennec2006intrinsic}, and under support concentration conditions on shape spaces \cite{le2001locating, ziezold1977expected}; these results do not apply here since $\mathcal{C}$ carries no Riemannian structure.

In contrast to the Hilbert space setting, uniqueness of the empirical Fréchet mean generally fails in $\mathcal{C}$. The functional $F^{(N)}([c])$ is not convex on $\mathcal{C}$ due to the minimization over phase shifts in the definition of the distance. Equivalently, the lifted functional $ F^{(N)}(c)$
is not convex on $\mathcal{H}$. As a result, multiple distinct minimizers in $\mathcal{C}$ may exist, which also affects the computational properties of the algorithm discussed in Section~\ref{sec:computation of the empirical frechet mean}.

\section{Computation of the Empirical Fr\'echet Mean}
\label{sec:computation of the empirical frechet mean}

In the following, we consider an iterative algorithm for approximating the empirical Fr\'echet mean of a collection of closed curves. Let $\{\gamma_1,\dots,\gamma_N\}\subset\mathcal H$ be a sample of closed curves and denote their equivalence classes by $[\gamma_k]\in\mathcal C$. The empirical Fr\'echet functional on the quotient space $\mathcal C$ is given by
\[
F^{(N)}([c])
=
\frac{1}{N}\sum_{k=1}^N d([c],[\gamma_k])^2.
\]
Working with representatives $c\in\mathcal H$, this can be written as
\[
F^{(N)}(c)
=
\frac{1}{N}
\sum_{k=1}^N
\min_{\tau\in\mathcal S^1}
\|c-\tau\cdot\gamma_k\|_{L^2}^2.
\]
For a given curve $c\in\mathcal H$, we denote by
\[
\tau_k(c)
\in
\arg\min_{\tau\in\mathcal S^1}
\|c-\tau\cdot\gamma_k\|_{L^2}^2
\]
an optimal circular shift of $\gamma_k$ with respect to $c$.
The sample curves may be normalized using either time or arc length parametrization, as discussed in Section~\ref{sec:a metric structure for closed curves}. The proposed algorithm alternates between an alignment step, in which optimal phase shifts are computed, and an averaging step, in which the aligned curves are averaged. Such block coordinate descent schemes are used in shape analysis and functional data analysis to compute Karcher means under group actions~\cite[Algorithm 31]{srivastava2016functional}. The present algorithm adapts this framework to circular reparametrizations, see Algorithm~\ref{alg:frechetmean}.
\begin{algorithm}[h]
\caption{Alternating alignment and averaging for the empirical Fréchet mean}
\label{alg:frechetmean}
\begin{algorithmic}[1]
\REQUIRE Curves $\gamma_1,\dots,\gamma_N\in\mathcal H$, tolerance $\varepsilon>0$, maximum number of iterations $I_{\max}$

\STATE Initialize
$\mu_0=\frac1N\sum_{k=1}^N\gamma_k$

\STATE Compute optimal shifts
$\tau_{k,0}=\tau_k(\mu_0)$,
$k=1,\dots,N$

\STATE Compute $F^{(N)}(\mu_0)$ using the optimal shifts $\tau_{k,0}$

\FOR{$n=0,\dots,I_{\max}$}

\STATE Update the mean
$\mu_{n+1}
=\frac1N\sum_{k=1}^N(\tau_{k,n}\cdot\gamma_k)$

\STATE Re-align the sample curves by computing
$\tau_{k,n+1}=\tau_k(\mu_{n+1})$,
$k=1,\dots,N$

\STATE Compute $F^{(N)}(\mu_{n+1})$ using the optimal shifts $\tau_{k,n+1}$

\IF{$
\dfrac{|F^{(N)}(\mu_{n+1})-F^{(N)}(\mu_n)|}
{F^{(N)}(\mu_n)+\delta}
<\varepsilon
$}
    \STATE \textbf{break}
\ENDIF

\ENDFOR

\RETURN $\mu_{n+1}$ as an approximation of $\mu_F^{(N)}$
\end{algorithmic}
\end{algorithm}

\subsection{Convergence of the Algorithm}

We analyze the convergence properties of the iterative scheme. 
The method constitutes a block coordinate descent algorithm applied to the functional
\[
\mathcal{G}(c,\boldsymbol{\tau}=(\tau_1,\dots,\tau_N))
=
\frac{1}{N}\sum_{k=1}^N
\int_0^1
\|c(s)-(\tau_k \cdot \gamma_k)(s)\|^2 ds,
\]
defined on the product space \(\mathcal{H} \times (\mathcal{S}^1)^N\), yielding the relation
\(F^{(N)}(c) = \min_{\boldsymbol{\tau} \in (\mathcal{S}^1)^N} \mathcal{G}(c,\boldsymbol{\tau}).\)
\begin{theorem}\label{theo:convergence}
Let $(\mu_n)$ be the sequence generated by the algorithm. Then:

\medskip
\noindent\textnormal{(i) Monotonicity:}
\(
F^{(N)}(\mu_{n+1}) \le F^{(N)}(\mu_n)\) for all $n$.

\noindent\textnormal{(ii) Convergence of functional values:}
The sequence $\bigl(F^{(N)}(\mu_n)\bigr)$ converges to some
$F_\infty \ge 0$.

\noindent\textnormal{(iii) Boundedness:}
The sequence $(\mu_n)$ is bounded in
$L^2(\mathcal S^1,\mathbb R^d)$.

\noindent\textnormal{(iv) Subsequential convergence:}
There exists a subsequence $(\mu_{n_j})$ and a limit
$\mu_\ast\in\mathcal H$ such that
\(
\mu_{n_j}\rightharpoonup\mu_\ast\)
weakly in $L^2$.
\end{theorem}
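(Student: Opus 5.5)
The plan is to treat Algorithm~\ref{alg:frechetmean} as block coordinate descent on $\mathcal{G}$ and to use the identity $F^{(N)}(c)=\min_{\boldsymbol{\tau}}\mathcal{G}(c,\boldsymbol{\tau})$. Write $\boldsymbol{\tau}_n=(\tau_{1,n},\dots,\tau_{N,n})$. By construction, $\boldsymbol{\tau}_n$ consists of optimal shifts for $\mu_n$, so $\mathcal{G}(\mu_n,\boldsymbol{\tau}_n)=F^{(N)}(\mu_n)$. For fixed $\boldsymbol{\tau}_n$, the map $c\mapsto\mathcal{G}(c,\boldsymbol{\tau}_n)$ is the Hilbert-space empirical Fréchet functional of the aligned curves $\tau_{k,n}\cdot\gamma_k$. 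By the decomposition in Section~\ref{sec:frechet mean in metric spaces}, its unique minimizer is the arithmetic mean $\frac1N\sum_k \tau_{k,n}\cdot\gamma_k=\mu_{n+1}$. Combining these facts gives
\[
F^{(N)}(\mu_{n+1})=\min_{\boldsymbol{\tau}}\mathcal{G}(\mu_{n+1},\boldsymbol{\tau})\le \mathcal{G}(\mu_{n+1},\boldsymbol{\tau}_n)\le \mathcal{G}(\mu_n,\boldsymbol{\tau}_n)=F^{(N)}(\mu_n),
\]
which proves (i). The only care needed is to check that the optimal shifts exist, which follows from compactness of $\mathcal{S}^1$ and continuity of the shift action, as in the Proposition showing that $d$ is a metric.

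Part (ii) follows immediately. The sequence $F^{(N)}(\mu_n)$ is nonincreasing by (i) and bounded below by $0$, so it converges to some $F_\infty\ge 0$ by the monotone convergence theorem for real sequences.

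For (iii), there are two routes. The direct route uses the fact that the shift action is an isometry, so $\|\tau_{k,n}\cdot\gamma_k\|_{L^2}=\|\gamma_k\|_{L^2}$. The triangle inequality then gives $\|\mu_{n+1}\|_{L^2}\le \frac1N\sum_k\|\gamma_k\|_{L^2}$, and the same bound holds for $\mu_0$. Alternatively, one can reuse the estimate from Step~1 of Proposition~\ref{prop:existence minimizer}: with $M=F^{(N)}(\mu_0)$, monotonicity gives $\|\mu_n\|_{L^2}\le\sqrt{NM}+\|\gamma_1\|_{L^2}$. I would present the direct bound, since it is cleaner.

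Part (iv) then follows from (iii). The space $\mathcal{H}$ is a Hilbert space, hence reflexive, so bounded sequences are weakly sequentially compact (Banach--Alaoglu/Eberlein--Šmulian, or directly via an orthonormal basis with a diagonal argument). This yields a subsequence $\mu_{n_j}\rightharpoonup\mu_\ast$.

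None of the steps is difficult. The main point requiring care is (i): one must verify that the value $F^{(N)}(\mu_n)$ computed in the algorithm coincides with $\mathcal{G}(\mu_n,\boldsymbol{\tau}_n)$, i.e.\ that the shifts used are optimal for the current mean. One must also avoid claiming anything stronger in (iv), such as strong convergence or that $\mu_\ast$ is a minimizer. The statement only asserts weak subsequential convergence, and the weak lower semicontinuity from Proposition~\ref{prop:existence minimizer} would at most give $F^{(N)}(\mu_\ast)\le F_\infty$, which is not required here.
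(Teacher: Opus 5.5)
Your proposal is correct and follows the paper's proof step for step. Part (i) uses block coordinate descent on $\mathcal{G}$ with the chain $F^{(N)}(\mu_{n+1})\le\mathcal{G}(\mu_{n+1},\boldsymbol{\tau}_n)\le\mathcal{G}(\mu_n,\boldsymbol{\tau}_n)=F^{(N)}(\mu_n)$, (ii) is monotone convergence, (iii) is the isometric triangle-inequality bound, and (iv) is weak sequential compactness from reflexivity of $\mathcal{H}$; your remark that the bound in (iii) also covers $\mu_0$ fills in a small detail the paper leaves implicit.
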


\begin{proof}
\textit{Step 1: Monotonicity:}
The algorithm alternates exact minimization steps of the functional $\mathcal{G}$.
For fixed $\mu_n$, the alignment step yields an optimal $\boldsymbol{\tau}_n = (\tau_{1,n},\dots,\tau_{N,n})$ such that
\(
\mathcal{G}(\mu_n,\boldsymbol{\tau}_{n})
= \min_{\boldsymbol{\tau}} \mathcal{G}(\mu_n,\boldsymbol{\tau})
= F^{(N)}(\mu_n).
\)
For fixed $\boldsymbol{\tau}_{n}$, the update step in the following iteration then minimizes the strictly convex 
quadratic functional 
\(c \mapsto\mathcal{G}(c,\boldsymbol{\tau_n})\) in $c$, hence resulting in a unique minimizer $\mu_{n+1}$ for which it holds
\(
\mathcal{G}(\mu_{n+1},\boldsymbol{\tau}_{n})
\le \mathcal{G}(\mu_n,\boldsymbol{\tau}_{n}).
\)
Since $F^{(N)}(\mu_{n+1}) = \min_{\boldsymbol{\tau}}\mathcal{G}(\mu_{n+1},\boldsymbol{\tau})
\le \mathcal{G}(\mu_{n+1},\boldsymbol{\tau}_{n})$, combining the two inequalities gives
\(
F^{(N)}(\mu_{n+1}) \le F^{(N)}(\mu_n).
\)

\medskip
\textit{Step 2: Convergence of functional values:}
Since $F^{(N)} \ge 0$, the monotone decreasing sequence 
$\bigl(F^{(N)}(\mu_n)\bigr)$ is bounded below and therefore converges to 
some $F_\infty \ge 0$.

\medskip
\textit{Step 3: Boundedness:}
By the update formula and the triangle inequality,
\[
\|\mu_{n+1}\|_{L^2}
= \left\|\frac{1}{N}\sum_{k=1}^N (\tau_{k,n}\cdot\gamma_k)\right\|_{L^2}
\le \frac{1}{N}\sum_{k=1}^N \|(\tau_{k,n}\cdot\gamma_k)\|_{L^2}
= \frac{1}{N}\sum_{k=1}^N \|\gamma_k\|_{L^2},
\]
where the last equality holds because circular shifts are $L^2$-isometries.
The right-hand side is finite and independent of $n$, so $(\mu_n)$ is 
uniformly bounded in $L^2(\mathcal{S}^1,\mathbb{R}^d)$.

\medskip
\textit{Step 4: Subsequential convergence:}
Since $L^2(\mathcal{S}^1,\mathbb{R}^d)$ is a reflexive Hilbert space, the bounded sequence $(\mu_{n})$ from step 3 admits a weakly convergent subsequence $(\mu_{n_j})$ with
$\mu_{n_j} \rightharpoonup \mu_\ast \in L^2(\mathcal{S}^1,\mathbb{R}^d)$. 
\end{proof}

The previous theorem establishes convergence of the functional values and weak convergence of a subsequence of the iterates in $L^2$. However, these results alone do not provide a complete characterization of the asymptotic behavior of the algorithm. Even though it is shown that a subsequence $(\mu_{n_j})$ exists, which converges weakly in $\mathcal{H}$ to some limit $\mu^\ast$, the theorem does not characterize the nature of such limit points. 
In particular, without additional assumptions, one cannot guarantee that 
$\mu^\ast$ is a (local or global) minimizer of $F^{(N)}$. This limitation 
stems from the fact that the functional
$\mathcal{G}(c,\boldsymbol{\tau})$ is not jointly convex in $(c,\boldsymbol{\tau})$ 
due to the minimization over phase shifts. As a result, the functional may admit multiple stationary points or local minima, and the algorithm can therefore converge to different limit points depending on the initialization. We also emphasize that the optimization is performed in the Hilbert space 
$\mathcal{H} = L^2(\mathcal{S}^1,\mathbb{R}^d)$, and therefore any weak limit  $\mu^\ast$ is, in general, only guaranteed to lie in 
$\mathcal{H}$. In particular, the $L^2$-metric does not control derivatives, 
and no regularity or smoothness of limit points can be ensured. Nevertheless, if the input curves $\{\gamma_k\}$ belong to 
$C^1(\mathcal{S}^1,\mathbb{R}^d)$ or $C^2(\mathcal{S}^1,\mathbb{R}^d)$, then each finite 
iterate $\mu_n$ is given by an average of shifted curves and therefore 
inherits the same regularity. Hence, all iterates remain in $C^1(\mathcal{S}^1,\mathbb{R}^d)$ or $C^2(\mathcal{S}^1,\mathbb{R}^d)$, 
respectively, allowing us to consider the derivative and curvature of the iterates in the following sections. Finally, we note that the empirical Fréchet means are consistent, i.e. as $N \to \infty$, the set of empirical minimizers converges into the stochastic Fréchet mean set almost surely, provided the latter is non-empty and $\mathbb{E}[d(c_0,X)^2] < \infty$ for some $c_0 \in \mathcal{C}$~\cite{molchanov2005theory}.

The iterative scheme developed in this section forms the computational basis to study 
the van der Pol system in Section~\ref{sec:vdp}, the Rosenzweig--MacArthur model in Section~\ref{sec:rosenzweig-macarthur model}, and the Morris--Lecar system in Section~\ref{sec:morris lecar model}.

\section{Averaging of Dynamics for Geometric Fréchet Means}\label{sec:reconstructing dynamics}

Up to this point, we have considered two normalisations of periodic trajectories:
canonical time normalisation and arc length parametrisation, see Section~\ref{sec:a metric structure for closed curves}.  
While time normalisation preserves temporal dynamics, the resulting Fréchet mean may distort the geometric characteristics of the sample curves. Arc length parametrisation, in contrast, yields a geometrically meaningful Fréchet mean curve, but removes the underlying dynamical information.  We therefore introduce a procedure to reconstruct dynamics for Fréchet means computed from arc length parametrised curves.

The main idea is to treat geometry and dynamics as complementary
components.  The geometric component is obtained by averaging curves in
arc length parametrisation, yielding an approximation to the
geometric Fréchet mean $\mu_F$.  The dynamical component is represented
by scalar speed profiles along the common arc length coordinate, which
are averaged separately and then recombined with the geometric mean
curve to produce a representative mean trajectory.  


\paragraph{Representation of Dynamical Information}
Let $\gamma_k\colon[t_k, t_k+T_k]\to\mathbb{R}^d$ be a continuously
differentiable, regular, closed curve with period $T_k$, cumulative arc length $l_k$ and total arc
length $L_k$.  Its physical speed is $v_k(t)=\|\dot\gamma_k(t)\|$. Denoting the normalised arc length parameter by $s\in \mathcal{S}^1$, the arc length parametrization of the curve $\gamma_k$ and the corresponding pullback of the speed to arc length coordinates are given by
\begin{align*}
    \overline{\gamma}_k(s)
  &= \gamma_k\bigl(l_k^{-1}(sL_k)\bigr), \qquad s\in \mathcal{S}^1\\
  \overline v_k(s)
  &= v_k\!\left(l_k^{-1}(s L_k)\right), \qquad s\in \mathcal{S}^1.
\end{align*}
During the iterative computation of the geometric Fr\'echet mean, each
curve $\overline \gamma_k$ is aligned to the current mean $\mu_i$ via an optimal
cyclic phase shift $\tau_{k,i}\in \mathcal{S}^1$.  The final
optimal shift $\tau_{k,n}$ belonging to $\mu_n$ is applied to the speed
profile
\(
  \overline v_k^{\,\tau}(s)
  = (\tau_{k,n} \cdot \overline v_k) (s),
\) yielding a shifted speed profile consistent with the aligned curve $(\tau_{k,n} \cdot \overline{\gamma}_k)(s)$.

\paragraph{Harmonic Average of Dynamic Information}
The representative dynamics along the geometric Fr\'echet mean are
constructed by averaging local traversal times rather than speeds
directly. For a trajectory with speed profile
$\overline{v}_k^{\,\tau}(s)$, traversing an infinitesimal arc length
element $\mathrm{d}\ell$ requires the local traversal time
\(
  \mathrm{d}t_k
  =
  \mathrm{d}\ell/\overline{v}_k^{\,\tau}(s).
\)
This follows directly from the definition of speed
$v = \mathrm{d}\ell/\mathrm{d}t$
for regular curves \cite{pressley2010elementary}. The reciprocal velocity $1/v$ may therefore be interpreted as a local traversal-time density. Since traversal times add linearly along trajectories, it is natural to
define the average local traversal time by the arithmetic mean
\begin{align}\label{eq:dt_mean}
  \mathrm{d}t_{\mathrm{mean}}
  =
  \frac{1}{N}\sum_{k=1}^N \mathrm{d}t_k
  =
  \frac{1}{N}\sum_{k=1}^N
  \frac{\mathrm{d}\ell}{\overline{v}_k^{\,\tau}(s)}
  =
  \mathrm{d}\ell
  \left(
    \frac{1}{N}\sum_{k=1}^N
    \frac{1}{\overline{v}_k^{\,\tau}(s)}
  \right).
\end{align}
We now seek a representative speed profile
$v_{\mathrm{mean}}(s)$ such that the averaged traversal time again
admits the standard kinematic form
\(
  \mathrm{d}t_{\mathrm{mean}}
  =
  \mathrm{d}\ell/v_{\mathrm{mean}}(s).
\)
Comparing with \eqref{eq:dt_mean} yields
\begin{align*}
    \frac{1}{v_{\mathrm{mean}}(s)}
    &=
    \frac{1}{N}\sum_{k=1}^N
    \frac{1}{\overline{v}_k^{\,\tau}(s)},\\
    v_{\mathrm{mean}}(s)
    &=
    \left(
      \frac{1}{N}\sum_{k=1}^N
      \frac{1}{\overline{v}_k^{\,\tau}(s)}
    \right)^{-1}.
\end{align*}
This is precisely the harmonic mean of the aligned speed profiles, and
we therefore define
\(
  v_{\mathrm{harm}}(s)
  :=
  v_{\mathrm{mean}}(s).
\)
This principle already appears in the classical average-speed problem for motion over fixed distances \cite{falk2005average}. Suppose a traveller moves the same distance $L$
outward with speed $v_1$ and returns the same distance with speed
$v_2$. The total travel time is
\[
  T
  =
  \frac{L}{v_1}
  +
  \frac{L}{v_2},
\]
while the total travelled distance is $2L$. The effective average speed
is therefore
\[
  v_{\mathrm{eff}}
  =
  \frac{2L}{
    \frac{L}{v_1}+\frac{L}{v_2}
  }
  =
  \frac{2}{
    \frac{1}{v_1}+\frac{1}{v_2}
  },
\]
which is the harmonic mean of $v_1$ and $v_2$. The reason is that travel
times, rather than velocities themselves, are additive. In contrast, the arithmetic mean of the aligned speed profiles does not correspond to averaging traversal times and therefore produces a systematic underestimation of the reconstructed traversal time.
Indeed, since $\phi(x)=1/x$ is strictly convex on
$\mathbb{R}_{>0}$, Jensen's inequality yields
\[
  \frac{1}{v_{\mathrm{arith}}(s)}
  =
  \phi\!\left(
    \frac{1}{N}\sum_k
    \overline{v}_k^{\,\tau}(s)
  \right)
  \leq
  \frac{1}{N}\sum_k
  \phi\!\left(
    \overline{v}_k^{\,\tau}(s)
  \right)
  =
  \frac{1}{N}\sum_{k=1}^N
  \frac{1}{\overline{v}_k^{\,\tau}(s)}
  =
  \frac{1}{v_{\mathrm{harm}}(s)},
\]
with equality if and only if all aligned speed profiles coincide at
$s$. Hence
\(
  v_{\mathrm{arith}}(s)
  \geq
  v_{\mathrm{harm}}(s)
\)
holds pointwise, so the arithmetic reconstruction traverses the mean
curve strictly faster and therefore yields a strictly shorter
reconstructed period whenever the speed profiles are not identical.

\paragraph{Dynamic Reconstruction of Fréchet Mean}
The iterative algorithm in Section~\ref{sec:computation of the empirical frechet mean} yields $\mu_n$ as the pointwise average of the
aligned, arc length parametrised sample curves and we assume $\mu_n$ to be regular\footnote{The regularity assumption $\| \mu_n' \| > 0$ is non-trivial in general. The pointwise average of arc length parametrised curves can develop near-zero speed where the sample curves carry opposing tangent directions. In practice, however, regularity holds whenever the sample orbits are geometrically close, which is the relevant regime for a meaningful Fréchet mean.}. Since the pointwise
average of arc length parametrised curves is, in general, not itself arc
length parametrised, the speed $\|\mu_n'(s)\|$ is not identically
constant and must be accounted for explicitly in the time map.  At arc length
position $s$, traversing the infinitesimal arc length element
$\mathrm{d}\ell = \|\mu_n'(s)\|\,\mathrm{d}s$ at speed
$v_{\mathrm{harm}}(s)$ takes local time
\(
  \mathrm{d}t=
  \|\mu_n'(s)\|\,\mathrm{d}s/v_{\mathrm{harm}}(s).
\)
Integrating from $0$ to $s$ yields the cumulative time map
\[
  \theta_\mathrm{harm}(s)
  = \int_0^s \frac{\|\mu_n'(\sigma)\|}{v_{\mathrm{harm}}(\sigma)}\,
    \mathrm{d}\sigma
  = \frac{1}{N}\sum_{k=1}^N
    \int_0^s \frac{\|\mu_n'(\sigma)\|}{\overline{v}_k^{\,\tau}(\sigma)}\,
    \mathrm{d}\sigma,
  \qquad s\in \mathcal{S}^1,
\]
where the second equality follows by linearity of integration and the
definition of $v_{\mathrm{harm}}$. Due to the regularity of $\mu_n$, the cumulative time map $\theta_\mathrm{harm}$ is invertible and we obtain the dynamically
reconstructed mean trajectory via
\[
  \mu_n^{\mathrm{dynamic}}(t)
  = \mu_n\!\left(\theta_{\mathrm{harm}}^{-1}(t)\right),
  \qquad t\in[0,T_{\mathrm{harm}}],
\]
with period $T_{\mathrm{harm}} := \theta_{\mathrm{harm}}(1)$.  We thus
obtain a mean trajectory that captures both the typical geometry and the
typical dynamical behaviour of the sample.

\paragraph{Example}
To demonstrate the separation of geometric and dynamical information
concretely, consider two trajectories sharing the same geometry, describing a circle, but
differing in their temporal parametrisation
\begin{align*}
    \gamma_1(s) &= \gamma_2(s) = \bigl(\cos(2\pi s),\,\sin(2\pi s)\bigr),
  \quad s\in \mathcal{S}^1, \\
  v_1(s) &= 1, \qquad
  v_2(s) = 1 + \varepsilon\sin(2\pi s), \quad 0 < \varepsilon < 1,
\end{align*} see Figure~\ref{fig:speed_profiles_and_time_maps}.
Since the geometric curves coincide and are already aligned, the
approximation to the geometric Fr\'echet mean is given by $\mu_n(s) = (\cos(2\pi s), \sin(2\pi s))$.
The individual cumulative time maps are
\[
  \theta_k(s) = \int_0^s \frac{L}{v_k(\sigma)}\,\mathrm{d}\sigma,
  \qquad L = 2\pi,
\]
with periods $T_k = \theta_k(1)$.  These are the physical times at
which each trajectory reaches arc length position $s$; see Figure~\ref{fig:speed_profiles_and_time_maps}. As the mean curve $\mu_n$ geometrically aligns with $\gamma_1$ and $\gamma_2$, it admits the same arc length
$L = 2\pi$, so $\|\mu_n'(\sigma)\| = L$ for all $\sigma$.  The harmonic and arithmetic time maps---resulting from the harmonic and arithmetic means $v_\mathrm{harm}$ and $v_\mathrm{arith}$---are thus given by
\[
  \theta_{\mathrm{harm}}(s)
  = \int_0^s \frac{L}{v_{\mathrm{harm}}(\sigma)}\,\mathrm{d}\sigma,
  \qquad
  \theta_{\mathrm{arith}}(s)
  = \int_0^s \frac{L}{v_{\mathrm{arith}}(\sigma)}\,\mathrm{d}\sigma.
\]
In this special case, because $\|\mu_n'(\sigma)\| = L_k = L$ for both
curves, the harmonic time map satisfies
\begin{align}\label{eq:theta_harm}
  \theta_{\mathrm{harm}}(s)
  &= \int_0^s \frac{L}{v_{\mathrm{harm}}(\sigma)}\,\mathrm{d}\sigma
   = \frac{1}{2}\sum_{k=1}^2 \int_0^s \frac{L}{v_k(\sigma)}\,\mathrm{d}\sigma
   = \tfrac{1}{2}\bigl(\theta_1(s) + \theta_2(s)\bigr),
\end{align}
so the reconstructed period satisfies $T_{\mathrm{harm}} =
\theta_{\mathrm{harm}}(1) = \tfrac{1}{2}(T_1+T_2)$, i.e.\ it
coincides with the arithmetic mean of the sample periods. Applying the pointwise inequality $v_{\mathrm{arith}}(s) \geq
v_{\mathrm{harm}}(s)$ yields
\(
  \theta_{\mathrm{arith}}(s) \leq \theta_{\mathrm{harm}}(s),
\)
with strict inequality wherever $v_1(s)\neq v_2(s)$.  In particular,
$T_{\mathrm{arith}} < T_{\mathrm{harm}} = \frac{1}{2}(T_1+T_2)$, so the arithmetic mean
underestimates the period.

\begin{figure}[h]
\centering

\begin{subfigure}[b]{0.4\linewidth}
    \centering

    \begin{overpic}[width=\linewidth]{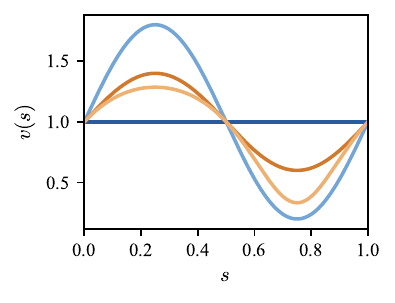}

        \put(63,58){
        \begin{minipage}{0.3\linewidth}
        \footnotesize

        \textcolor{vone}{\rule{0.8cm}{1.2pt}}~$v_1$\\[0.2em]
        \textcolor{vtwo}{\rule{0.8cm}{1.2pt}}~$v_2$\\[0.2em]
        \textcolor{varith}{\rule{0.8cm}{1.2pt}}~$v_{\mathrm{arith}}$\\[0.2em]
        \textcolor{vharm}{\rule{0.8cm}{1.2pt}}~$v_{\mathrm{harm}}$

\end{minipage}
        }

    \end{overpic}

    \caption{Speed profiles $v$}
\end{subfigure}
\hspace{0.02\textwidth}
\begin{subfigure}[b]{0.4\linewidth}
    \centering

    \begin{overpic}[width=\linewidth]{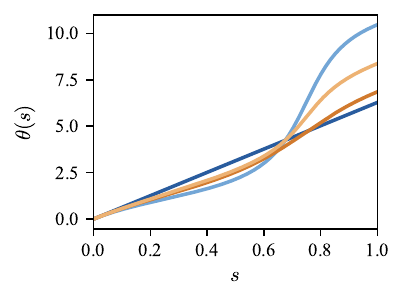}

        \put(25,56){
        \begin{minipage}{0.3\linewidth}
        \footnotesize

        \textcolor{vone}{\rule{0.8cm}{1.2pt}}~$\theta_1$\\[0.2em]
        \textcolor{vtwo}{\rule{0.8cm}{1.2pt}}~$\theta_2$\\[0.2em]
        \textcolor{varith}{\rule{0.8cm}{1.2pt}}~$\theta_{\mathrm{arith}}$\\[0.2em]
        \textcolor{vharm}{\rule{0.8cm}{1.2pt}}~$\theta_{\mathrm{harm}}$

\end{minipage}
        }

    \end{overpic}

    \caption{Cumulative time maps $\theta$}
\end{subfigure}

\caption{Comparison of arithmetic and harmonic averaging for dynamical reconstruction. (a) Speed profiles $v_1$ and $v_2$ together with their arithmetic mean $v_{\mathrm{arith}}$ and harmonic mean $v_{\mathrm{harm}}$. By Jensen's inequality, the harmonic mean lies below the arithmetic mean whenever the speed profiles differ. (b)~Corresponding cumulative traversal-time maps $\theta_1$ and $\theta_2$ together with the reconstructed averages $\theta_{\mathrm{arith}}$ and $\theta_{\mathrm{harm}}$. The harmonic reconstruction coincides with the pointwise average of the traversal times, whereas the arithmetic reconstruction underestimates the total traversal time and therefore the reconstructed period. This demonstrates that harmonic averaging provides the physically consistent reconstruction of the dynamics.}
\label{fig:speed_profiles_and_time_maps}
\end{figure}

\paragraph{Interpretation of $T_\mathrm{harm}$}
The equality in \eqref{eq:theta_harm} is specific to the present example,
in which all curves share the same arc length and geometric shape, and
serves to illustrate that the harmonic reconstruction can coincide with
the average of the sample periods. In the general case, where
the sample curves have different arc lengths $L_k$ and
$\|\mu_n'(s)\|\neq L_k$, this identity no longer holds. Nevertheless, the harmonic reconstruction retains a natural and
meaningful interpretation as an average of traversal times along the
mean curve. Indeed,
\[
  T_{\mathrm{harm}}
  = \int_0^1 \frac{\|\mu_n'(s)\|}{v_{\mathrm{harm}}(s)}\,
    \mathrm{d}s
  = \int_0^1 \|\mu_n'(s)\|
    \left(\frac{1}{N}\sum_{k=1}^N \frac{1}{\overline v_k^{\,\tau}(s)}\right)
    \mathrm{d}s
  = \frac{1}{N}\sum_{k=1}^N
    \int_0^1 \frac{\|\mu_n'(s)\|}{\overline v_k^{\,\tau}(s)}\, \mathrm{d}s,
\]
so that $T_{\mathrm{harm}}$ represents the average time required to
traverse the mean geometry using the individual speed profiles. In contrast, the arithmetic reconstruction averages speeds rather than
traversal times and therefore does not correspond to a consistent
averaging principle for the underlying dynamics. By Jensen's inequality,
\(
  T_{\mathrm{arith}} \leq T_{\mathrm{harm}},
\)
with strict inequality whenever the speed profiles differ. Thus, the
arithmetic reconstruction systematically underestimates the traversal
time, whereas the harmonic reconstruction yields a more faithful and
physically consistent representative of the sample dynamics.


\section{Assessing the Validity of Fr\'echet Means}\label{sec:diagnostics}

Even for simple samples in $\mathbb{R}^d$, the mean might not be a sensible summary, for instance
in the presence of multimodality or structural heterogeneity, a common phenomenon in statistical application
\cite{ronchetti2009robust,hastie2009elements}. This raises the question of when the
Fr\'echet mean constitutes a meaningful notion of central tendency and how to detect potential
failures. We propose two complementary classes of diagnostic tools:
curvature-based diagnostics, which assess the geometric regularity of the Fr\'echet mean, and medoid-based diagnostics, which assess its representativeness relative to the sample. Together, they provide a quantitative framework for assessing the validity of the empirical Fr\'echet mean.\footnote{Throughout this section, we consider only the geometric Fr\'echet mean obtained via arc length parametrization.}This question is particularly important in the heterogeneous dynamical regimes discussed in Section~\ref{sec:morris lecar model}.

\subsection{Curvature Diagnostics}\label{subsec:curvature_diag}
The Fréchet functional is defined with respect to the $L^2$ metric, which measures pointwise discrepancies between curves but does not control derivatives. Consequently, a Fréchet mean may be close to the sample in the $L^2$ sense while exhibiting substantially different geometric behavior, such as excessive bending or oscillatory structure. To assess geometric representativeness beyond the optimization criterion, we therefore introduce curvature-based diagnostics based on total curvature and bending energy. Curvature provides an intrinsic description of how a curve bends in state space, independently of
its parametrization \cite{tapp2016differential}. Let $\gamma \in C^2(\mathcal{S}^1,\mathbb{R}^d)$ be a twice continuously differentiable,
regular closed curve parametrized by normalized arc length. The curvature vector is
$\gamma''(s)$ and the associated scalar curvature is given by
\(
  \kappa(s) \;=\; \|\gamma''(s)\|.
\)

The total curvature of a closed curve is
\[
  \mathrm{TC}(\gamma) \;=\; \int_0^1 \kappa(s)\,\mathrm{d}s.
\]
A classical result of Fenchel~\cite{Fenchel1929} establishes the lower bound
\(
  \mathrm{TC}(\gamma) \;\geq\; 2\pi,
\)
with equality if and only if $\gamma$ is a planar convex curve. In the context of periodic trajectories, total curvature measures the cumulative turning per period
and is sensitive to the number and prominence of oscillatory or bursting events per period, serving
as an effective coarse diagnostic for detecting qualitatively different geometric regimes among the
sample curves \cite{tapp2016differential}. In heterogeneous samples, the distribution
$\{\mathrm{TC}(\gamma_k)\}_{k=1}^N$ may exhibit multiple well-separated clusters,
corresponding, for instance, to trajectories with differing numbers of bursts per period. A warning sign of non-representativeness is that the Fréchet mean attains a total curvature that lies between distinct clusters, or even outside the observed range.
In this regime, the Fr\'echet mean interpolates between incompatible patterns,
often resulting in a distorted non-representative trajectory. 

The bending energy of $\gamma$ is
\[
  E_{\mathrm{bend}}(\gamma) \;=\; \int_0^1 \kappa(s)^2\,\mathrm{d}s.
\]
Since any non-degenerate closed curve must bend ($\kappa \not\equiv 0$), we have
$E_{\mathrm{bend}}(\gamma) > 0$. This functional penalizes large curvature values quadratically and is particularly sensitive to sharp turns and fine-scale oscillations, in contrast to total curvature, which captures the overall amount of turning. In the theory of elastic curves, the total squared curvature serves as a measure of bending regularity and geometric complexity \cite{miura2025elasticcurvesselfintersections}. In heterogeneous datasets, the bending energy of the sample may exhibit multiple clusters corresponding to trajectories with differing smoothness or oscillatory complexity. A warning sign is when the Fréchet mean attains a bending energy outside these empirical regimes, suggesting interpolation between incompatible structures or the introduction of artificial fine-scale behavior.

\subsection{Dispersion and Representativeness Diagnostics}\label{subsec:disp_diag}

While the empirical Fr\'echet mean minimizes the empirical Fr\'echet functional, it is not
constrained to lie in the observed sample. In heterogeneous or multimodal datasets, the Fr\'echet
mean may therefore occupy a region of the curve space not well supported by any individual
trajectory. We introduce medoid-based diagnostics that quantify representativeness relative to the
empirical distribution. Similar representative objects are commonly used in cluster analysis, where medoids provide robust central elements of heterogeneous datasets \cite{kaufman2009finding}.

\paragraph{Fréchet medoid and mean--medoid distance}
Given a collection of curves $\Gamma_N = \{\gamma_1,\dots,\gamma_N\}$ in $\mathcal{C}$,
the Fr\'echet medoid is defined as any element
\[
  \mu_F^{\mathrm{med}}
  \;\in\;
  \arg\min_{\gamma_j \in \Gamma_N} \sum_{k=1}^N d(\gamma_j,\gamma_k)^2.
\]
We follow~\cite{bulté2023medoidsplitsefficientrandom} for the definition of the Fréchet medoid.
Unlike the Fr\'echet mean, which may yield a newly constructed curve, the Fr\'echet medoid is necessarily one of the observed trajectories.
It therefore represents the most central observed curve in the sample. In analogy with the empirical Fréchet variance, we define the Fréchet medoid variance as
\[
\mathrm{Var}_F^{\mathrm{med}}
=
\inf_{\gamma_j \in \Gamma_N}
\frac{1}{N}
\sum_{k=1}^N d(\gamma_j,\gamma_k)^2.
\]
Both the Fréchet variance and the Fréchet medoid variance measure dispersion of the sample, but with respect to different admissible sets, namely $\mathcal{C}$ and $\Gamma_N$. Since $\Gamma_N \subset \mathcal{C}$, it follows immediately that
\[
\mathrm{Var}_F^{(N)} \le \mathrm{Var}_F^{\mathrm{med}}.
\]
Equality holds if and only if at least one sample curve is an empirical Fréchet mean, that is, if the global minimizer over $\mathcal{C}$ can be chosen from $\Gamma_N$.

To measure how far the Fréchet mean deviates from the medoid, we consider
\(
d(\mu_F, \mu_F^{\mathrm{med}})^2.
\)
A small value indicates that the Fréchet mean resembles a typical observed curve, whereas a large value suggests that the mean lies in a region of the curve space that is not represented by a central sample trajectory. In a Hilbert space $\mathcal{H}$ the following variance decomposition holds for all $c \in \mathcal{H}$:
\begin{equation*}\label{eq:hilbert_var_decomp}
  \frac{1}{N}\sum_{k=1}^N \|c - \gamma_k\|_{L^2}^2
  \;=\; \|c - \bar{\gamma}\|_{L^2}^2 + \mathrm{Var}_F^{(N)},
\end{equation*} see Section~\ref{sec:frechet mean in metric spaces}.
Inserting $c = \mu_F^{\mathrm{med}}$ and
$\bar{\gamma} = \mu_F$ yields the exact identity
\begin{equation*}\label{eq:hilbert_medoid_gap}
  d\!\left(\mu_F,\,\mu_F^{\mathrm{med}}\right)^2
  \;=\; \mathrm{Var}_F^{\mathrm{med}} - \mathrm{Var}_F^{(N)} =: \Delta \mathrm{Var},
\end{equation*} and thus connecting the mean-medoid distance directly to the variance gap $\Delta \mathrm{Var} \geq 0$. In other words, the squared distance between mean and medoid is entirely determined by how much the restriction to observed curves costs in terms of the Fréchet functional. In particular, a large variance gap directly implies that the mean has moved away from any central observed trajectory.
This equality only holds in the Hilbert space setting. In the quotient space $\mathcal{C} = L^2(\mathcal{S}^1,\mathbb{R}^d)/\mathcal{S}^1$, the phase-alignment minimization in the metric $d$ breaks the linearity on which the Hilbert decomposition relies. However, we can still obtain a similar result, that also holds in the quotient space. Consider $\{\tau_k^*\}$ to be the optimal shifts of the sample to $\mu_F$, yielding the aligned curves $\tilde{\gamma}_k = \tau_k^* \cdot \gamma_k$. Then, the Hilbert identity holds exactly with 
\[\frac{1}{N}\sum_{k=1}^N \| c - \tilde{\gamma}_k \|_{L^2}^2 = \|c-\mu_F\|_{L^2}^2 + \mathrm{Var}_F^{(N)}\] for general $c \in \mathcal{H}.$ Now let $\tau^*$ be the optimal shift that aligns $\mu_F^\mathrm{med}$ to $\mu_F$. Choosing $c=\tau^* \cdot \mu_F^\mathrm{med}$ yields
\[d(\mu_F, \mu_F^\mathrm{med})^2 + \mathrm{Var}_F^{(N)} = \frac{1}{N}\sum_{k=1}^N \|\tau^* \cdot \mu_F^\mathrm{med} - \tilde{\gamma}_k \|_{L^2}^2.\]
Since $\tilde{\gamma}_k$ are optimally aligned to $\mu_F$, and not necessarily to $\mu_F^\mathrm{med}$, we obtain the following inequality
\[\|\tau^* \cdot \mu_F^\mathrm{med} - \tilde{\gamma}_k \|_{L^2}^2 \geq \min_\tau \|\tau^* \cdot \mu_F^\mathrm{med} - \tau \cdot \tilde{\gamma}_k \|_{L^2}^2 = d(\mu_F^\mathrm{med},\gamma_k)^2,\] using the $\mathcal{S}^1$-invariance of $d$. Combining these two equations yields
\begin{align*}
    d(\mu_F, \mu_F^\mathrm{med})^2 + \mathrm{Var}_F^{(N)} &= \frac{1}{N}\sum_{k=1}^N \|\tau^* \cdot \mu_F^\mathrm{med} - \tilde{\gamma}_k \|_{L^2}^2 \\
    &\geq \frac{1}{N} \sum_{k=1}^N \min_\tau \|\tau^* \cdot \mu_F^\mathrm{med} - \tau \cdot \tilde{\gamma}_k \|_{L^2}^2 \\ 
    &= \frac{1}{N} \sum_{k=1}^N d(\mu_F^\mathrm{med},\gamma_k)^2 = \mathrm{Var}_F^\mathrm{med},
\end{align*} resulting in a lower bound for $d(\mu_F,\mu_F^\mathrm{med})^2$ given by the variance gap $\Delta \mathrm{Var}$. This tightly couples the representativeness of the Fréchet mean in relation to the Fréchet medoid to the corresponding variances.
A large value of the variance gap, and thus also the mean--medoid distance, signals that the mean has moved away from any central observed trajectory. This deviation can have several causes, ranging from geometric heterogeneity to a skewed distribution or outliers present in the sample. In all cases, a large mean--medoid distance indicates that the Fréchet mean should be interpreted with caution, as it may not represent a typical trajectory of the dataset.

\paragraph{Nearest-neighbor distances and isolation ratio}
To assess local support of the Fr\'echet mean within the dataset, we introduce
\[
  \delta_F
  \;=\; \min_{1 \le i \le N} d\!\left(\mu_F, \gamma_i\right)^2,
  \qquad
  \delta_F^{\mathrm{med}}
  \;=\; \min_{\substack{1 \le i \le N \\ \gamma_i \ne \mu_F^{\mathrm{med}}}}
        d\!\left(\mu_F^{\mathrm{med}}, \gamma_i\right)^2.
\]
The quantity $\delta_F$ and $\delta_F^\mathrm{med}$ measure the squared distance from the Fréchet mean and medoid, respectively, to their nearest neighboring sample curve. 
Both quantities are bounded above by the respective Fr\'echet variances
\begin{equation}\label{eq:delta_bounds}
  \delta_F \;\le\; \mathrm{Var}_F^{(N)},
  \qquad
  \delta_F^{\mathrm{med}} \;\le\; \frac{N}{N-1}\,\mathrm{Var}_F^{\mathrm{med}},
\end{equation} where the factor in the second inequality stems from the exclusion of the medoid in the minimization. These inequalities show that neither
$\delta_F$ nor $\delta_F^{\mathrm{med}}$ can exceed the global scale of dispersion
of the respective representative, providing a consistency check between local and
global structure. The isolation ratio $\mathcal{S}$, defined by
\(
  \mathcal{S} \;=\; \delta_F/\delta_F^{\mathrm{med}},
\)
compares these two local scales: the proximity of the Fr\'echet mean to the dataset
against the intrinsic local support of the sample near the medoid. To interpret
$\mathcal{S}$, consider the geometric structure of the sample. In a geometrically coherent dataset, the Fréchet mean lies within the cloud of observed curves, so $\delta_F$ is of the same order as $\delta_F^{\mathrm{med}}$, 
, yielding $\mathcal{S}=O(1)$. In a heterogeneous sample, however, the Fréchet mean may be drawn toward an intermediate position between structurally distinct groups of curves. In this case, $\delta_F$, reflects inter-group separation rather than intra-group spacing, and $\mathcal{S}$ scales like the ratio of inter-group to intra-group squared distances, which may become large when groups are well separated relative to their internal spread. Thus, $\mathcal{S}$ can be viewed as a scale indicator: values of order one are consistent with geometric coherence, whereas $S\gg 1$ suggests substantial isolation of the Fréchet mean relative to the sample's intrinsic local spacing.

The proposed diagnostics assess complementary aspects of representativeness and should be interpreted jointly. Curvature-based quantities detect geometric artifacts that may arise from interpolation between incompatible trajectory structures, while the mean--medoid distance and variance gap quantify global displacement of the Fr\'echet mean relative to central observed trajectories. The isolation ratio further measures whether the mean remains locally supported by the sample. Together, these diagnostics distinguish between loss of geometric regularity, displacement from the data cloud, and lack of local support, thereby identifying different mechanisms by which the empirical Fr\'echet mean may cease to provide a meaningful summary of the sample.

\subsection{Illustrative Example}

To illustrate the behavior of the proposed diagnostics, we consider a synthetic dataset consisting of two geometrically distinct classes of closed curves, consisting of circular and figure-eight curves, inducing a bimodal distribution characterised by distinct geometric complexity. Figure~\ref{fig:diagnostics_samples_and_curves} shows the sample separated by classes, together with the resulting geometric Fréchet mean and medoid. While each class is internally coherent, the combined dataset displays pronounced geometric heterogeneity. The Fréchet mean interpolates between the two classes and produces a curve that does not resemble any observed trajectory. In particular, while the sample curves in the second class exhibit a symmetric bilobed structure, the Fréchet mean yields a distorted, asymmetric version of such a configuration. This introduces artificial asymmetry not present in the data and fails to capture the characteristic geometric patterns from either class. Consequently, the mean does not represent a meaningful representative of the sample.
\begin{figure}[h]
\centering

\begin{subfigure}[b]{0.28\linewidth}
    \centering
    \includegraphics[width=\linewidth]{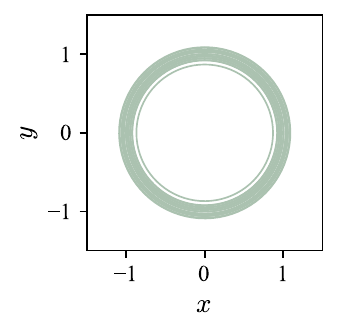}
    \caption{Circular sample}
\end{subfigure}
\begin{subfigure}[b]{0.28\linewidth}
    \centering
    \includegraphics[width=\linewidth]{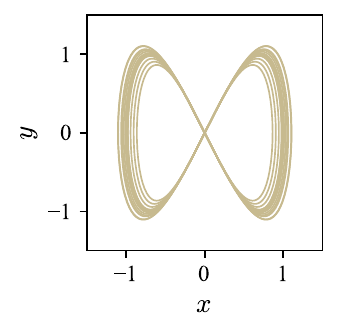}
    \caption{Self-intersecting sample}
\end{subfigure}
\begin{subfigure}[b]{0.28\linewidth}
    \centering
    \includegraphics[width=\linewidth]{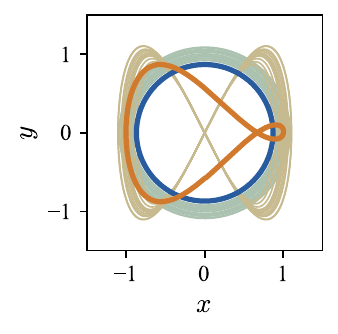}
    \caption{Fréchet mean and medoid}
\end{subfigure}
\caption{Illustration of a failure mode of the Fr\'echet mean in a geometrically heterogeneous dataset. (a) Sample of circular curves with small radial variability. (b) Sample of symmetric self-intersecting figure-eight curves. (c) Fr\'echet medoid (blue) and Fr\'echet mean (orange) for the combined dataset. While the medoid remains representative of one observed class, the Fr\'echet mean interpolates between incompatible geometric structures and produces an asymmetric distorted curve not present in the data, indicating a loss of representativeness.}
\label{fig:diagnostics_samples_and_curves}
\end{figure}

This lack of representativeness is further reflected in the curvature-based diagnostics. As shown in Figure~\ref{fig:example_curvature_diagnostics}, both the total curvature and the bending energy of the Fréchet mean lie outside the clusters formed by the sample curves. In fact, the values for the Fréchet mean are substantially larger than those observed in either class, indicating that the Fréchet mean introduces additional geometric complexity---manifested as excessive bending and irregularity---that is not present in the data.
\begin{figure}[h]
\centering

\begin{subfigure}[b]{0.35\linewidth}
    \centering
    \includegraphics[width=\linewidth]{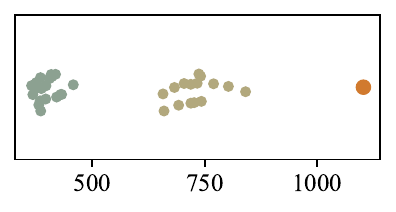}
    \caption{Total curvature}
\end{subfigure}
\begin{subfigure}[b]{0.35\linewidth}
    \centering
    \includegraphics[width=\linewidth]{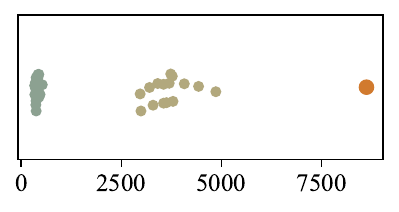}
    \caption{Bending energy}
\end{subfigure}
\caption{Curvature-based diagnostics for the synthetic bimodal dataset. The distributions of total curvature (a) and bending energy (b) form two well-separated clusters corresponding to the circular and self-intersecting curve classes. In both quantities, the Fr\'echet mean lies outside the empirical range and attains substantially larger values, indicating the introduction of artificial geometric irregularities and oscillatory structure not present in the sample trajectories.}
\label{fig:example_curvature_diagnostics}
\end{figure}
The medoid-based diagnostics further confirm this behavior, see Table~\ref{tab:example_diagnostics}. The mean--medoid distance is large relative to the overall spread of the dataset, as quantified by the Fréchet variances, indicating that the mean has moved far away from 
any centrally located observed curve. Moreover, the nearest-neighbor distance $\delta_F$ of the mean is significantly larger than $\delta_F^\mathrm{med}$, resulting in a significantly elevated isolation ratio. These observations indicate that the Fréchet mean is not only globally displaced from the central region of the data, but also locally unsupported by nearby sample curves.

\begin{table}[t]
\centering
\caption{Summary of diagnostic quantities for the synthetic bimodal dataset. The large variance gap $\Delta \mathrm{Var} = \mathrm{Var}_F^{\mathrm{med}} - \mathrm{Var}_F$, together with the substantial mean--medoid distance and the high isolation ratio $\mathcal{S}$, indicate that the Fréchet mean is both globally displaced and locally unsupported.}
\label{tab:example_diagnostics}
\begin{tabular}{cccccc}
    \toprule
    $\mathrm{Var}_F$ & $\mathrm{Var}_F^{\mathrm{med}}$ & $d(\mu_F, \mu_F^{\mathrm{med}})^2$ & $\delta_F$ & $\delta_F^{\mathrm{med}}$ & $\mathcal{S}$ \\
    \midrule
    0.244 & 0.376 & 0.132 & 0.132 & 0.002 & 82.880\\
    \bottomrule
\end{tabular}
\end{table}

Overall, this example demonstrates a clear failure mode of the Fréchet mean in the presence of geometric heterogeneity. The Fréchet mean introduces artificial asymmetry and exhibits excessive curvature and bending beyond the observed range. The proposed diagnostics consistently detect these effects and provide a principled means of identifying the mechanisms responsible for the breakdown of the Fréchet mean as a representative trajectory.

\section{The Van der Pol Oscillator}\label{sec:vdp}

As a first illustrative example for the computation of geometric and dynamical Fréchet means, we consider the two–dimensional van der Pol oscillator. This system is a prototypical model for nonlinear self-sustained oscillations with a pronounced fast-slow structure and a stable limit cycle. 

\paragraph{The Fast-Slow Structure and Periodic Orbits}
We consider the van der Pol system in the following fast–slow form~\cite{kuehn2015multiple}
\begin{align*}
    \dot{x} &= y - \frac{x^3}{3} + x, \\
    \dot{y} &= \varepsilon (a - x).
\end{align*} The variable $0 < \varepsilon \ll 1$ controls the separation of time scales and we fix it to be $\varepsilon = 0.1$. For parameter values $a \in (-1,1)$, the system possesses a unique attracting limit cycle. The only equilibrium point in this regime is given by the unstable steady state
\(
(x^*,y^*) = (a,a^3/3-a).
\)
One can prove in this regime that for every initial condition except the equilibrium itself, the solution converges to the periodic orbit. After a transient phase, the long–term dynamics are therefore completely described by this attracting limit cycle, see Figure~\ref{fig:vdp_solution}.
\begin{figure}[htbp]
\centering

\begin{subfigure}[t]{0.55\textwidth}
    \vspace{0pt}
    \centering
    \includegraphics[width=\linewidth]
    {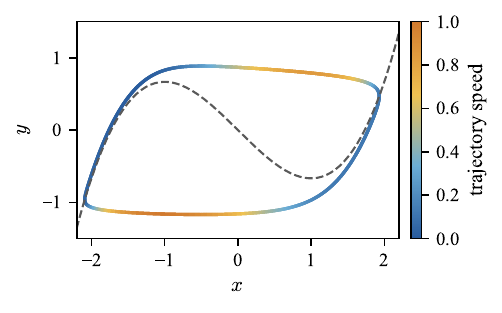}
    \caption{Phase portrait}
    \label{fig:vdp_phaseportrait}
\end{subfigure}
\hspace{0.02\textwidth}
\begin{subfigure}[t]{0.33\textwidth}
    \vspace{0pt}
    \centering
    \includegraphics[
        width=\linewidth,
        height=.38\textheight,
        keepaspectratio
    ]{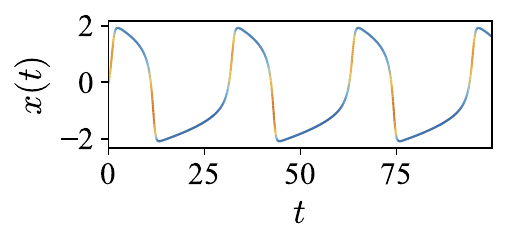}

    \includegraphics[
        width=\linewidth,
        height=.38\textheight,
        keepaspectratio
    ]{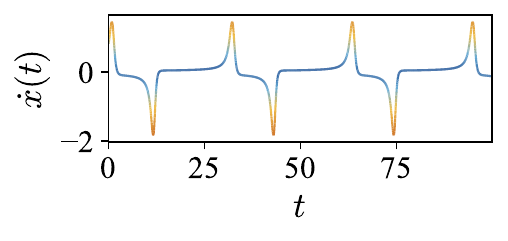}
    \vspace{-0.4pt}
    \caption{State coordinates $x$ and $\dot{x}$}
    \label{fig:vdp_timeseries}
\end{subfigure}

\caption{Dynamics of the van der Pol oscillator in the fast--slow regime with $\varepsilon = 0.1$. (a) Phase portrait showing attracting limit cycle. The trajectory exhibits slow motion along the attracting branches of the cubic critical manifold and rapid transitions between them, producing the characteristic relaxation oscillation structure. (b) Time series of the state variable $x$ and its derivative $\dot{x}$ along the periodic orbit, highlighting the alternation between slow and fast dynamical phases.}
\label{fig:vdp_solution}
\end{figure}

For small $\varepsilon$, the system separates into slow and fast dynamics: the variable $x$ changes quickly, while $y$ evolves slowly. In the limit $\varepsilon \to 0$, the fast dynamics relaxes rapidly onto the set where $\dot{x}=0$, which is given by
\[y = \frac{x^3}{3} - x.\]
This cubic curve is called the critical manifold.
Trajectories move slowly near its attracting outer branches and undergo rapid transitions between them when stability is lost. Consequently, the periodic orbit consists of alternating segments of slow motion along the cubic and fast horizontal jumps between the branches. This geometric decomposition into slow drift along the critical manifold and fast transitions between branches makes the van der Pol oscillator an ideal benchmark for studying parametrization effects in Fréchet mean computations. Furthermore, we analyze how a decoupled reconstruction of the dynamics under arc length parametrization can be used to obtain a dynamical Fréchet mean.

\paragraph{Computation of the Geometric Fréchet Mean}
We now describe the computational pipeline used to construct empirical Fréchet means from a collection of sample trajectories of the van der Pol system. We consider $N$ sample trajectories corresponding to parameter values 
$a_1,\dots,a_N \in (-1,1).$ For each parameter value $a_k$, the system is solved numerically using a standard ODE solver, yielding a trajectory parametrized by physical time $t$. To isolate the periodic behavior, we discard an initial transient phase. From the remaining trajectory, we extract exactly one period by detecting two consecutive local maxima of the $x$-component. The portion of the trajectory between these maxima defines a closed curve
\(
\gamma_k : [t_k, t_k + T_k] \to \mathbb{R}^2,
\)
where $t_k$ denotes the starting time and $T_k$ the period of the $k$-th trajectory. This procedure produces a collection of closed curves $\gamma_k$, each associated with a different parameter value $a_k$. Each extracted periodic orbit is naturally parametrized by physical time, as determined by the van der Pol dynamics. We consider two normalization strategies: time normalization and arc length reparametrization, denoted by $\tilde{\gamma}_k, \ \overline{\gamma}_k : \mathcal{S}^1\to \mathbb{R}^2$, see Section~\ref{sec:a metric structure for closed curves}. Given this collection of parametrized curves, we compute the empirical Fréchet mean using the iterative alignment algorithm described in Section~\ref{sec:computation of the empirical frechet mean}. The resulting mean curve provides a numerical approximation of the empirical Fréchet mean of the sample. The entire procedure is applied separately to the time-normalized curves $\tilde{\gamma}_k$ and the arc length parametrized curves $\overline{\gamma}_k$. This allows us to quantify the influence of parametrization on the resulting mean curve.

\paragraph{Parametrization Effects on the Geometric Fréchet Mean} 
The sample curves and the corresponding Fréchet mean approximations for both parametrizations are shown in Figure~\ref{fig: frechet mean vdp}.
\begin{figure}[h]
\centering

\begin{subfigure}[t]{0.4\textwidth}
    \vspace{0pt}
    \centering
    \includegraphics[width=\linewidth]
    {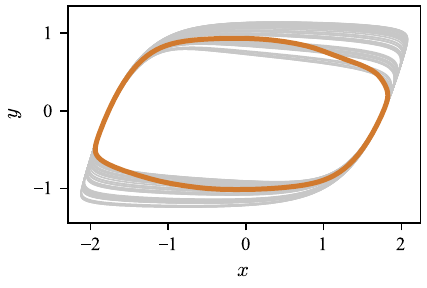}
    \caption{Time parametrization}
\end{subfigure}
\begin{subfigure}[t]{0.4\textwidth}
    \vspace{0pt}
    \centering
    \includegraphics[
        width=\linewidth,
        height=.38\textheight,
        keepaspectratio
    ]{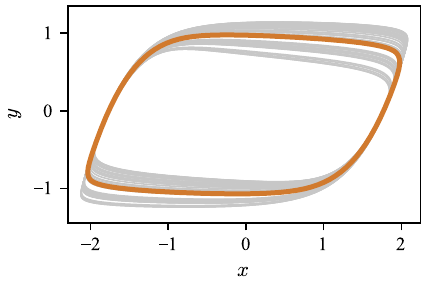}
    \caption{Arc length parametrization}
\end{subfigure}
\caption{The geometric Fr\'echet mean computed from $N=50$ periodic trajectories of the van der Pol system corresponding to parameter values $a_1,\dots,a_N\in(-1,1)$ (grey curves), using (a) time parametrization and (b) arc length parametrization. In the time-parametrized setting, the resulting Fr\'echet mean exhibits noticeable geometric distortions caused by the underlying fast--slow dynamics of the system. Since trajectories spend substantially more time in slow regions than in fast transition regions, the sampling density along the orbit becomes highly non-uniform. In addition, small differences in the temporal dynamics between trajectories lead to local phase mismatches that further distort the averaged curve. As a result, the time-parametrized mean no longer preserves the characteristic trapezoidal relaxation-oscillation geometry of the sample trajectories. In contrast, the arc length parametrization removes the influence of traversal speed and considers only the geometric structure of the curves. The resulting Fr\'echet mean therefore preserves the characteristic geometric shape of the van der Pol limit cycles while remaining centered within the family of sample trajectories.}
\label{fig: frechet mean vdp}
\end{figure}
The grey curves represent $N=50$ trajectories generated from parameter values $a_1,\dots,a_N \in (-1,1)$ drawn uniformly at random\footnote{A fixed random seed (123) is used to ensure reproducibility. The same set of trajectories is used for both the time-normalized and arc length parametrized computations.}. For all parameter values in $(-1,1)$, the periodic orbits exhibit a characteristic trapezoidal shape induced by the fast–slow structure of the dynamics.
Under time normalization, the sampling density along each curve reflects the underlying dynamics: many points lie on the slow segments, while comparatively few points lie on the fast transitions. Since the Fréchet mean is computed with respect to the $L^2$-metric on aligned parametrized curves, this non-uniform sampling implicitly assigns greater weight to the slow regions. As a consequence, the alignment and averaging steps mix slow segments of some trajectories with fast segments of others, leading to a geometrically rounded mean curve that does not preserve the trapezoidal structure of the individual orbits. In contrast, the arc length parametrization removes the influence of dynamical speed and distributes sampling points uniformly with respect to geometric distance along the curve. The Fréchet mean computed in this representation therefore reflects the geometric shape of the periodic orbits rather than their time-dependent traversal. As a result, the mean curve preserves the characteristic trapezoidal structure of the sample trajectories. This comparison illustrates that, for fast–slow systems such as the van der Pol oscillator, the choice of parametrization has a significant impact on the geometric Fréchet mean. In particular, arc length parametrization yields a Fréchet mean that reflects the geometric structure of the family of closed orbits.

\paragraph{Dynamical Reconstruction of the Fréchet Mean}
In the arc length formulation, trajectories are treated as purely geometric objects, so the temporal information associated with non-uniform traversal along the orbit is removed. In particular, the characteristic fast--slow structure of the van der Pol system is not encoded in the geometric Fr\'echet mean alone. To reconstruct representative dynamics, we follow the procedure introduced in Section~\ref{sec:reconstructing dynamics}. Using the harmonic mean approach, we obtain the mean speed $v_{\mathrm{harm}}(s)$ and the mean cumulative time map $\theta_{\mathrm{harm}}(s)$, describing the dynamic information of the Fréchet mean curve. In particular, the inverse of $\theta_{\mathrm{harm}}(s)$ parametrizes the geometric mean and produces the dynamically reconstructed trajectory, see Figure~\ref{fig: dynamics of vdp}. The dynamic Fréchet mean sensibly recovers the characteristic fast--slow behavior of the van der Pol system---slow motion along the attracting branches and rapid transitions between them---while remaining geometrically centered within the family of sample trajectories.
\begin{figure}[htbp]
\centering

\begin{subfigure}[t]{0.55\textwidth}
    \vspace{0pt}
    \centering
    \includegraphics[width=\linewidth]
    {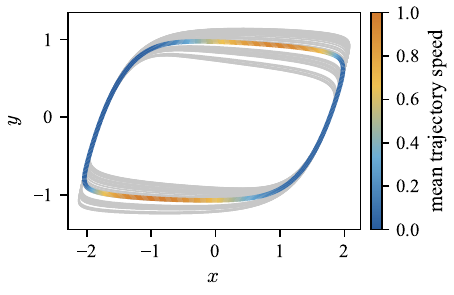}
    \caption{Dynamical Fréchet mean}
\end{subfigure}
\hspace{0.02\textwidth} 
\begin{subfigure}[t]{0.35\textwidth}
    \vspace{0.2em}
    \centering
    \includegraphics[
        width=\linewidth,
        height=.38\textheight,
        keepaspectratio
    ]{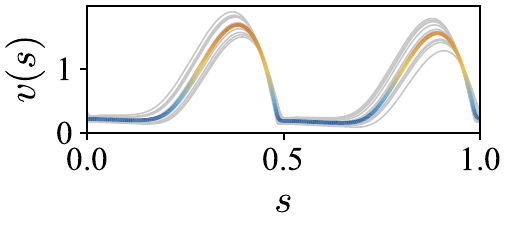}

    \vspace{0.5em}

    \includegraphics[
        width=\linewidth,
        height=.38\textheight,
        keepaspectratio
    ]{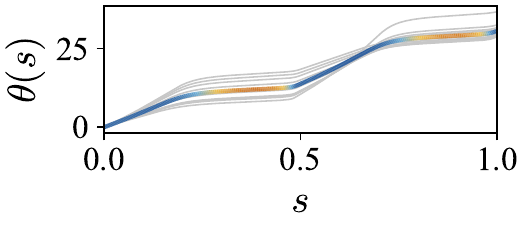}
    \vspace{-0.3em}
    \caption{Speed and traversal time}
\end{subfigure}

\caption{Dynamical reconstruction of the geometric Fr\'echet mean for the van der Pol system. (a)~ The geometric Fr\'echet mean together with the reconstructed dynamics shown through the color-coded trajectory speed. The reconstruction combines the geometrically averaged curve obtained from arc length parametrization with dynamics reconstructed from the aligned speed profiles using harmonic averaging. The resulting trajectory recovers the characteristic fast--slow structure of the van der Pol oscillator, exhibiting slow motion along the attracting branches and rapid transitions between them. (b) Corresponding aligned speed profiles $v$ and cumulative traversal-time maps $\theta$ for the sample trajectories together with their reconstructed averages. Both quantities interpolate naturally between the sample curves, indicating that the reconstructed dynamics provide a representative summary of the temporal behavior of the trajectory family. An animation illustrating the dynamics is available at \url{https://github.com/paulinash/FrechetMeanPaper}.}
\label{fig: dynamics of vdp}
\end{figure}

\section{Rosenzweig-MacArthur Model}\label{sec:rosenzweig-macarthur model}

In this section, we consider the classical predator--prey system introduced by \cite{rosenzweig1963graphical}. The model consists of a two-dimensional dynamical system that exhibits stable limit cycles arising through a Hopf bifurcation. We apply the proposed Fréchet mean ansatz to families of periodic solutions obtained via parameter variation and investigate whether the resulting mean curve yields a geometrically and dynamically meaningful representative trajectory. In particular, we examine how different parameter perturbations influence the resulting Fréchet mean.

\paragraph{The Predator-Prey model}
The Rosenzweig-MacArthur model is given in the following form 

\begin{align}
    \frac{d N}{d T} &= r N\left(1-\frac{N}{K}\right) - \phi(N)P,\\[2mm]
    \frac{d P}{d T} &= \frac{b N P}{a + N} - m P,
\end{align}
with a type-II functional response 
\(\phi(N) = \frac{c N}{a+N}\)~\cite{kot2001elements}. Here, \(N\) and \(P\) denote the prey and predator population sizes, respectively. 
The parameters have the following biological interpretations: \(r\) is the intrinsic growth rate of the prey, 
\(K\) is the prey carrying capacity, \(a\) is the predator half-saturation constant, \(b\) is the conversion rate from prey to predator, \(c\) is the maximal predation rate, and \(m\) is the predator mortality rate. For appropriate parameter regimes, the system admits stable limit cycles.
The functional response $\phi(N)$ models the rate at which each predator captures prey. The commonly used type II functional response is a hyperbolic function that saturates due to the time it takes to handle the prey.
Applying the change of variables
\(
N = a x,  P = r y a/c,  T = t/r,
\)
the system can be rewritten as
\begin{align}
\frac{dx}{dt} &= x\left(1 - \frac{x}{\gamma}\right) - \frac{x y}{1 + x}, \\[1mm]
\frac{dy}{dt} &= \beta \left(\frac{x}{1 + x} - \alpha\right) y,
\end{align}
with
\(
\alpha = m/b,  
\beta = b/r, 
\gamma = K/a.
\)
The parameter $\alpha$ is the predator loss-to-gain ratio, $\beta$  compares the predator growth to the intrinsic prey growth and $\gamma$ is the enrichment parameter
To study the effect of parameter variation while ensuring the presence of periodic solutions, the parameters need to satisfy
\[
0 < \alpha < 1, \quad \beta > 0, \quad \gamma > \gamma_H = \frac{1 + \alpha}{1 - \alpha},
\]
where \(\gamma_H\) is the Hopf bifurcation threshold.  
Since \(\gamma_H\) depends on \(\alpha\), it is convenient to define a dimensionless multiplier \(\gamma_\mathrm{mult} > 1\) and set
\[
\gamma = \gamma_H \, \gamma_\mathrm{mult} = \frac{1+\alpha}{1-\alpha}\,\gamma_\mathrm{mult}.
\]
This formulation allows independent variation of the three parameters \(\alpha, \beta,\) and \(\gamma_\mathrm{mult}\) while remaining in the oscillatory regime. The existence of stable periodic solutions makes the system a suitable test case for the geometric Fréchet mean framework introduced above. In particular, it allows us to investigate how parameter-induced variability of limit cycles is reflected in the corresponding Fréchet mean.

\subsection{Analysis of Fréchet Mean}
We now analyze the Fréchet mean,  
focusing on the influence of the parameters $\alpha$, $\beta$, and $\gamma_{\mathrm{mult}}$. The computational workflow follows that of Section~\ref{sec:vdp}. We investigate three distinct cases of parameter variation: the first two involve varying only $\alpha$ and $\gamma_{\mathrm{mult}}$ respectively, while the third considers simultaneous variation of all three parameters $\alpha$, $\beta$, and $\gamma_{\mathrm{mult}}$.
We restrict the parameter ranges to moderate intervals in order to avoid large-amplitude oscillations and slow transient convergence near the bifurcation boundary.

\paragraph{Variation of $\alpha$}
We randomly sample 12 values of  $\alpha \in (0.2,0.6)$ while keeping $\beta = 0.752$ and $\gamma_\mathrm{mult}=1.76$ fixed. The resulting periodic orbits exhibit the characteristic predator–prey phase portrait: a progression along the $x$-axis (small predator density), followed by a rapid increase in $y$, which results in a decrease in $x$. After reaching a minimal prey density, the predator population decreases, and the trajectory returns toward the origin before the next oscillation cycle begins, see Figure~\ref{fig:rm_phase}. The variation in $\alpha$ mainly affects the maximal amplitude in $x$- and $y$-direction---i.e. the maximal prey and predator population. Geometrically, this leads to sample curves that are increasingly stretched in the rightward and upward directions in phase space, while the segments near the coordinate axes remain largely unchanged. The Fréchet mean curve captures this deformation in a balanced manner: it preserves the common geometric features near the coordinate axes, where all trajectories nearly coincide, while interpolating between the varying peak amplitudes and the local curvature profile at the prey maximum. A notable effect occurs near the origin. Although the sample curves all exhibit a sharper change in direction, the mean curve appears slightly smoother. This may result from the use of constant phase shifts in the alignment procedure, where small local misalignments can introduce mild smoothing during averaging. In this example, the effect is subtle, though more flexible alignment methods such as the square-root velocity function (SRVF) framework could potentially preserve local geometric features more accurately~\cite{srivastava2016functional}. 
\begin{figure}[h]
\centering

\begin{subfigure}[t]{0.45\textwidth}
    \vspace{0pt}
    \centering

    \includegraphics[width=\linewidth]
    {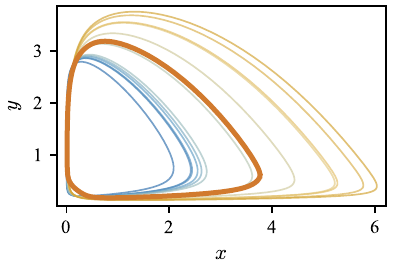}
    \caption{Geometric Fréchet mean}
    \label{fig:rm_phase}
\end{subfigure}
\hspace{0.02\textwidth}
\begin{subfigure}[t]{0.31\textwidth}
    \vspace{0pt}
    \centering
    \includegraphics[width=\linewidth,height=.42\textheight,keepaspectratio]
    {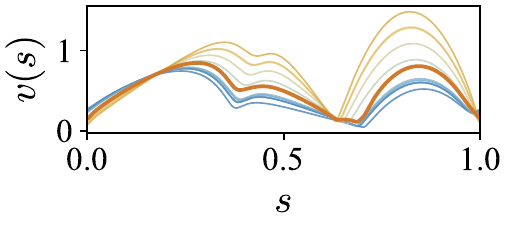}

    \includegraphics[width=\linewidth,height=.42\textheight,keepaspectratio]
    {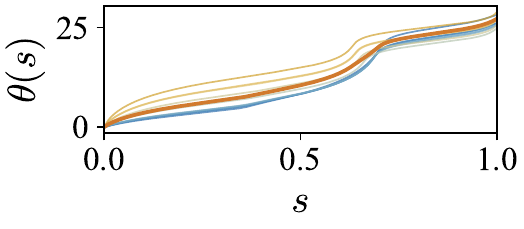}
    \vspace{0pt}
    \caption{Speed $v$ and time map $\theta$}
    \label{fig:rm_dynamic_information}

\end{subfigure}

\caption{Fr\'echet mean for the Rosenzweig--MacArthur model under variation of the predator loss-to-gain ratio $\alpha\in(0.2,0.6)$. (a) Limit cycles corresponding to varying values of $\alpha$ are shown in blue and yellow, together with the geometric Fr\'echet mean in orange. The sample trajectories exhibit the characteristic predator--prey oscillatory structure, while the Fr\'echet mean captures this geometry in an intermediate and geometrically consistent manner. In particular, the mean interpolates between variations in amplitude and curvature while preserving the common phase-space structure of the trajectories. (b) Corresponding aligned speed profiles $v$ and cumulative traversal-time maps $\theta$. The reconstructed dynamical averages $v_{\mathrm{harm}}$ and $\theta_{\mathrm{harm}}$ remain closest to the grey-blue sample trajectory, which is also geometrically closest to the Fr\'echet mean in phase space. This simultaneous agreement in geometry and dynamics demonstrates that the Fr\'echet mean is capable of providing a physically meaningful representative trajectory for the family of oscillations.}

\label{fig:rm_alpha}
\end{figure}

In Figure~\ref{fig:rm_dynamic_information}, we additionally display the aligned speed profiles $v$ and cumulative time maps $\theta$ associated with the sample trajectories together with their reconstructed averages along the Fr\'echet mean curve. The averaged quantities preserve the characteristic temporal structure of the system, in particular the distinction between slow and fast phases. In the speed profile $v$, one additionally observes adjacent local minima of the mean curve. This feature appears as an averaging artifact caused by slight shifts in the locations of the corresponding single minima across the sample trajectories. While the effect remains subtle in the present example, it illustrates a limitation of the dynamic averaging procedure: localized temporal features may become broadened or duplicated under alignment and averaging, and should therefore be interpreted with some care. Moreover, the reconstructed profiles along the mean curve interpolate naturally between those of the individual sample trajectories and are most similar to trajectories that are geometrically closest to the Fr\'echet mean in phase space (blue grey orbit). This consistency between geometric proximity and reconstructed temporal behavior supports the interpretation of the Fr\'echet mean as a representative trajectory combining both geometric and dynamical information.

\paragraph{Variation of $\gamma_\mathrm{mult}$}
We vary $\gamma_\mathrm{mult} \in (1.1,2)$ using 10 samples, while fixing $\alpha = 0.479$ and $\beta=0.644$, see Figure~\ref{fig:rm_gamma}. In contrast to varying $\alpha$, variation of $\gamma_\mathrm{mult}$ primarily induces a scaling effect on the limit cycles. The trajectories retain the same qualitative geometric structure, while their overall amplitude increases with $\gamma_\mathrm{mult}$. In addition, the orbits are no longer confined to run closely along the coordinate axes. Instead, their minimal $x$- and $y$-values increase, resulting in a gradual displacement of the cycles away from the axes. The geometric Fréchet mean captures both effects coherently. The mean curve preserves the characteristic oscillatory geometry while exhibiting an intermediate amplitude and an intermediate phase-space location. The scaling behavior is reflected consistently along the entire orbit, including the segments near the coordinate axes, indicating that the averaging procedure respects the global geometric structure rather than distorting individual regions.

\begin{figure}[h]
\centering

\begin{subfigure}[t]{0.45\textwidth}
    \vspace{0pt}
    \centering

    \includegraphics[width=\linewidth]
    {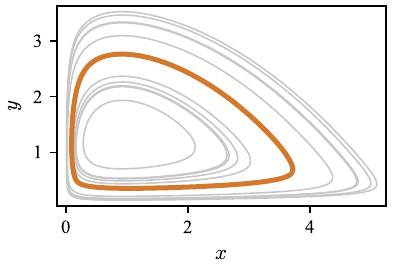}
    \caption{Variation of $\gamma_\mathrm{mult}$}
    \label{fig:rm_gamma}
\end{subfigure}
\hspace{0.02\textwidth}
\begin{subfigure}[t]{0.45\textwidth}
    \vspace{0pt}
    \centering
    \includegraphics[width=\linewidth]
    {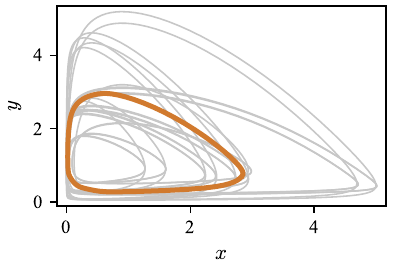}
    \caption{Variation of parameters $\alpha, \beta$ and $\gamma_\mathrm{mult}$}
    \label{fig:rm_all}

\end{subfigure}
\caption{Geometric Fr\'echet means for the Rosenzweig--MacArthur model under different parameter variations. (a) Variation of the parameter $\gamma_{\mathrm{mult}}$ primarily induces a global scaling of the limit cycles together with a gradual displacement away from the coordinate axes. Although the amplitudes and phase-space locations vary across the sample, the Fr\'echet mean captures these scaling effects coherently while preserving the qualitative predator--prey oscillation geometry. (b) Simultaneous variation of the parameters $\alpha$, $\beta$, and $\gamma_{\mathrm{mult}}$ generates substantially increased geometric variability, including changes in amplitude, scaling, curvature, and phase-space location. Despite this increased variability, the geometric Fr\'echet mean yields a coherent representative trajectory that preserves the characteristic oscillatory predator--prey structure of the sample trajectories.}
\label{fig: rm_gamma_and_all}
\end{figure}

\paragraph{Simultaneous Variation of $\alpha, \beta$, and $\gamma_\mathrm{mult}$}
We sample
\(
\alpha \in (0.2,0.6), 
\beta \in (0.1,2),\) and \( 
\gamma_\mathrm{mult} \in (1.1,2)
\)
using three values per parameter, resulting in $3^3 = 27$ sample trajectories, see Figure~\ref{fig:rm_all}. In contrast to varying $\alpha$ or $\gamma_\mathrm{mult}$ separately, as done before, this setting combines several geometric effects simultaneously, including amplitude, scaling, phase-shift, and curvature effects.
Consequently, the family of limit cycles displays substantially increased geometric variability. Despite this increased variability, the Fréchet mean remains well-defined and produces a coherent representative closed curve. It preserves the characteristic predator–prey oscillatory structure and reflects an intermediate amplitude and curvature profile while remaining geometrically consistent with the family of sample trajectories. Overall, the Rosenzweig--MacArthur example illustrates that the Fr\'echet mean preserves both the qualitative phase-space geometry and the dynamical structure of the oscillations across different types of parameter-induced variability, including amplitude changes, scaling and translation effects.

\section{Morris-Lecar Model}\label{sec:morris lecar model}
In this section, we investigate the Fréchet mean for the Morris–Lecar model, a three-dimensional neuronal system~\cite{kuehn2015multiple}. Its solutions are periodic but exhibit bursting behavior, with trajectories differing in the number of bursts per period. This variability substantially affects the geometric structure of the corresponding curves. Thus, we examine the effects of the Fréchet mean for the homogeneous case, i.e. all sample curves display an equal number of bursts per period, and heterogeneous case, i.e. the sample curves exhibit distinct number of bursts. We assess these cases using the curvature and Fréchet medoid diagnostics proposed in Section~\ref{sec:diagnostics}. In contrast to the previous two-dimensional examples, the studied Morris-Lecar system is three-dimensional, illustrating that the proposed framework  naturally extends to higher-dimensional settings.

\paragraph{The Bursting Model}
The Morris-Lecar model serves as a prototypical example of bursting oscillations, a phenomenon  originally studied in the context of neuroscience. Bursting oscillations are characterized by periodic time series patterns that are alternating between near steady-state and rapid oscillatory phases. In the Morris-Lecar model, bursting arises from the interaction of global and local bifurcations in a fast-slow system. More precisely, the Morris-Lecar model can be interpreted as a $(2,1)-$fast-slow system, with two fast variables and one slow variable. The bursting phenomenon manifests as multiple oscillations---typically three to five---within one global period, approaching a limit cycle in the fast subsystem, see Figure~\ref{fig: phase portrait ml}. 
\begin{figure}[h]
\centering
\begin{subfigure}[t]{0.4\textwidth}
    \vspace{1.2cm}
    \centering

    \includegraphics[width=\linewidth]
    {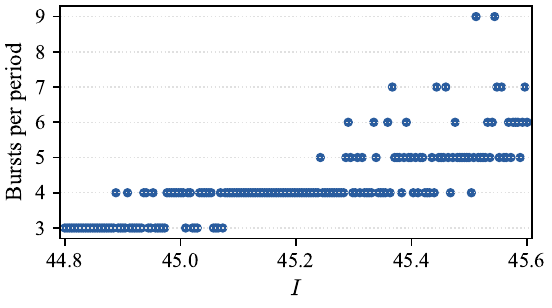}
    \vspace{0.4cm}
    \caption{Burst count}
    \label{fig: number_of_bursts}

\end{subfigure}
\hspace{0.02\textwidth}
\begin{subfigure}[t]{0.45\textwidth}
    \vspace{0pt}
    \centering
    \includegraphics[width=\linewidth,height=.42\textheight,keepaspectratio]
    {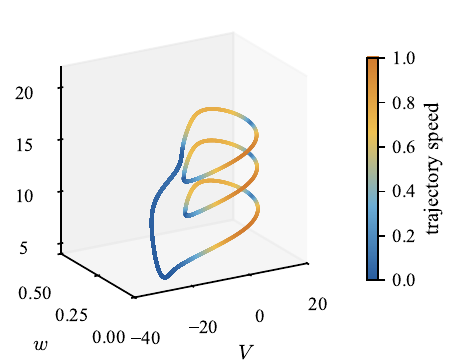}
    \caption{Phase portrait}
    \label{fig: phase portrait ml}
\end{subfigure}
\caption{Bursting dynamics in the Morris--Lecar system. (a) Number of fast oscillations per global bursting period as a function of the input current $I$, computed from $200$ sampled trajectories with $I\in[44.8,45.6]$. The plot illustrates how the burst count depends on the parameter value and reveals regions of stable oscillation numbers together with transition regions in which the number of spikes changes irregularly and the distribution becomes increasingly chaotic. This parameter regime therefore provides a controlled setting for studying geometric heterogeneity in periodic trajectories. (b) Representative periodic bursting orbit for $I=44.9$, exhibiting three bursts within one global period. The color grading distinguishes slow motion along the slow manifold (blue) from fast transitions in the burst segments (orange), highlighting the underlying fast--slow structure of the dynamics.}
\label{fig: ml solutions and number of bursts}
\end{figure}
Rather than focusing on the dynamical mechanisms of bursting, we use the variation in burst number as a structured example of geometric heterogeneity. This setting allows us to assess when the Fréchet mean provides a meaningful representative trajectory and when it fails to reflect the underlying geometric structure. The Morris-Lecar Model is considered using the following formulation
\begin{align*}
c\,\frac{dx_1}{dt}&=I - I_{\mathrm{ca}}- \left(g_{\mathrm{k}} x_2 + g_{\mathrm{kca}}\,\frac{y}{y+y_0}\right)(x_1 - V_k)- g_l (x_1 -V_l), \\
\frac{dx_2}{dt}&=\phi\,\tau_w(x_1)\,\bigl(w_\infty(x_1) - x_2\bigr), \\
\frac{dy}{dt}&=\varepsilon\,\bigl(-\mu\, I_{\mathrm{ca}} - y\bigr).
\end{align*} with auxiliary functions
\begin{align*}
m_\infty(x_1)&= \tfrac{1}{2}\left(1 + \tanh\!\left(\frac{x_1 - V_1}{V_2}\right)\right), 
& \tau_w(x_1)&= \cosh\!\left(\frac{x_1 - V_3}{2V_4}\right), \\
w_\infty(x_1)&= \tfrac{1}{2}\left(1 + \tanh\!\left(\frac{x_1 - V_3}{V_4}\right)\right),
&I_{\mathrm{Ca}}&= g_{\mathrm{Ca}}\, m_\infty(x_1)\,(x_1 - V_{\mathrm{ca}}).
\end{align*}
The parameters are chosen in the following way
\begin{align*}
    V_k &= -84, & V_l &= -60, & V_{\mathrm{Ca}} &= 120, &V_1 &= -1.2, \\
    V_2 &= 18, &g_k &= 8, & g_l &= 2, & c &= 20.\\
    V_3 &= 12, &V_4 &= 17.4, &g_{ca} &= 4, &g_{kca} &= 0.25\\
    y_0 &= 10, & \phi &= 0.23, &\mu &= 0.2, &\varepsilon &= 0.05,
\end{align*} and we consider the initial values \( (x_1, x_2,y)(0) = (-20,0,6)\).
The variable \(x_1\) corresponds to the membrane potential difference $V$ between the inside and the outside of the nerve cell, while $x_2$ is a recovery variable $w$ and $y$ denotes the calcium concentration [Ca] near the cell membrane. The parameter $I$, representing the external input current, is sampled from the interval $[44.8,45.5]$. For this range and the parameter values specified above, solutions converge to stable periodic bursting orbits. Within one global period, the trajectory exhibits predominantly 3,4, or 5 fast oscillations, depending on the value of $I$, see Figure~\ref{fig: number_of_bursts}. 
As $I$ increases toward 45.6, the number of fast oscillations per burst generally increases. In this regime, however, the spike count per period no longer varies smoothly with $I$, but exhibits irregular behavior. For $I > 45.6$, the system enters a regime where the classical fast-slow bursting structure is replaced by oscillatory dynamics dominated by the fast variables.

\paragraph{Fréchet Mean for Homogeneous Burst Count}
In the following, we compute the geometrical and dynamical Fréchet mean considering the homogeneous case, where all sample curves exhibit 4 bursts per period. We first compute the Fréchet mean using the framework introduced above. We consider $N=50$ closed sample curves resulting from parameter samples $I_1,\dots,I_N \in [45,45.3]$. Figure~\ref{fig:ml_homogeneous_b} displays the sample trajectories together with the Fréchet mean. The trajectories exhibit a similar overall structure, and the mean curve captures the characteristic pattern of four bursting oscillations within one period. To better illustrate the structure of the trajectories, we examine the time projections of the variables $V$ and $[\mathrm{Ca}]$, see Figure~\ref{fig:ml_homogeneous_a}. In each component, the Fréchet mean reproduces the characteristic pattern of four bursts with their typical shape. It also attains an intermediate length in each dimension, reflected in the endpoints of the curves. Thus, even for this complex three-dimensional bursting structure, the method yields a geometrically consistent representative trajectory across all variables.
As in the previous examples, we combine the geometric Fréchet mean with averaged dynamics to obtain a dynamically reconstructed representative trajectory. The associated speed profiles $v(s)$ and cumulative traversal times $\theta(s)$ are shown in Figure~\ref{fig:ml_homogeneous_c} together with the corresponding sample trajectories. Both quantities exhibit a coherent structure across the family of trajectories, allowing for a meaningful summary through the dynamic Fréchet mean. In particular, the averaged speed profile captures the characteristic alternation between slow and fast motion along the bursting orbit, resulting in a sensible geometric and dynamical summary of the family of trajectories through the Fréchet mean.

\begin{figure}[h]
\centering

\begin{subfigure}[t]{0.26\textwidth}
    \vspace{0.6cm}
    \centering

    \includegraphics[
        width=\linewidth,
        height=.18\textheight,
        keepaspectratio]{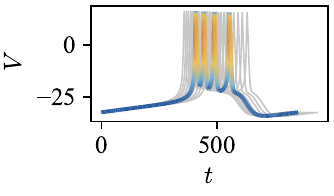}

    \vspace{0pt}

    \includegraphics[
        width=\linewidth,
        height=.18\textheight,
        keepaspectratio]{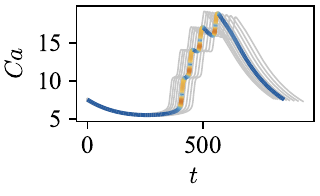}
    \vspace{0pt}
    \caption{Time projections}
    \label{fig:ml_homogeneous_a}
\end{subfigure}
\hspace{0.02\textwidth}
%
\begin{subfigure}[t]{0.38\textwidth}
    \vspace{0pt}
    \centering

    \includegraphics[width=\linewidth]{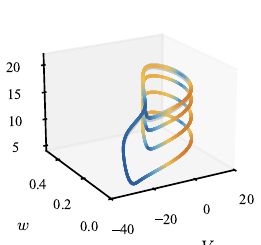}
    \vspace{0pt}
    \caption{Fréchet mean}
    \label{fig:ml_homogeneous_b}
\end{subfigure}
\hspace{0.02\textwidth}
%
\begin{subfigure}[t]{0.26\textwidth}
    \vspace{0.6cm}
    \centering

    \includegraphics[
        width=\linewidth,
        height=.18\textheight,
        keepaspectratio]{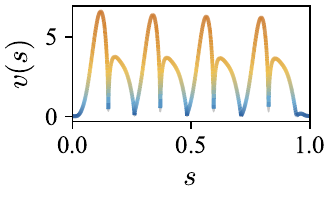}

    \vspace{0pt}

    \hspace*{-1.3em}
    \includegraphics[
        width=1.1\linewidth,
        height=.18\textheight,
        keepaspectratio]{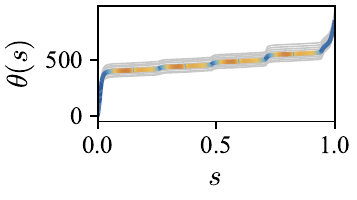}
        \vspace{-0.1cm}
    \caption{Dynamic profile}
    \label{fig:ml_homogeneous_c}
\end{subfigure}

\caption{Fr\'echet mean and reconstructed dynamics for the homogeneous Morris--Lecar bursting regime using trajectories generated from input currents $I\in[45,45.3]$. In this parameter regime, all sample trajectories exhibit four bursts within one global period. (a) Time projections of the membrane potential $V$ and calcium concentration $[\mathrm{Ca}]$ for the sample trajectories together with the reconstructed Fr\'echet mean. The mean reproduces the characteristic four-burst structure and captures representative amplitudes and temporal behavior across both state variables. (b) Geometric and dynamically reconstructed Fr\'echet mean in phase space. The resulting mean trajectory preserves the characteristic four-burst geometry of the sample together with the associated fast--slow bursting dynamics, indicating that both the geometric and temporal structures are represented consistently. (c) Aligned speed profiles $v(s)$ and cumulative traversal-time maps $\theta(s)$ for the sample trajectories and their reconstructed averages. The comparatively low variability across the sample reflects the homogeneous nature of the bursting regime and results in a stable and physically meaningful dynamic reconstruction that preserves the characteristic alternation between slow and fast motion.}

\label{fig:ml_homogeneous}
\end{figure}

\paragraph{Fréchet Mean for Heterogeneous Burst Count}
We now consider samples whose trajectories exhibit different numbers of bursts within a single period, leading to fundamentally distinct geometric structures. Constructing a meaningful average for such heterogeneous objects is inherently challenging, and we examine how the Fréchet mean behaves in this setting. Consider $N = 50$ trajectories obtained from random parameter samples $I_1, \dots, I_N \in [44.8, 45.4]$. The resulting distribution of burst counts per period is summarized in Table~\ref{tab: spike_distribution}, ranging from three to six bursts.
\begin{table}[t]
    \centering
    \caption{Burst count distribution for $ N = 50 $ Morris-Lecar trajectories with $ I \in [44.8,45.4] $. }
    \begin{tabular}{lcccc}
        \toprule
        Burst count $n$ & 3 & 4 & 5 & 6 \\
        \midrule
        Number of curves with $n$ bursts & 13 & 32 & 4 & 1 \\
        \bottomrule
    \end{tabular}
    
    \label{tab: spike_distribution}
\end{table}
We compute the Fréchet mean for this setting, see Figure~\ref{fig:ml_heterogeneous_b}. Although all sample trajectories exhibit fast-slow bursting dynamics, the number of bursts per period differs across the sample. In contrast to the homogeneous case, the Fréchet mean no longer produces a coherent bursting pattern. In the fast oscillatory regime, the bursts appear distorted or partially collapsed rather than fully developed. In the slow return phase, the agreement improves but remains noticeably weaker than in the structurally homogeneous setting. A clear visualization of this phenomenon is provided by the one-dimensional time projections of the variables $V$ and $\mathrm{Ca}$, see Figure~\ref{fig:ml_heterogeneous_a}. The misalignment of bursts becomes particularly evident in these projections. Although the Fréchet mean roughly exhibits four bursts in the variable $V$,  these bursts are visibly distorted and fail to reproduce the characteristic shape of the individual sample trajectories. This phenomenon is further reflected in the averaged speed profile shown in Figure~\ref{fig:ml_heterogeneous_c}. Because the speed maxima of the individual trajectories are shifted relative to one another, the averaging procedure blends these peaks rather than preserving their original structure. The resulting mean speed profile contains substantially more local maxima than the sample trajectories, leading to an artificial oscillatory behavior that is neither physically meaningful nor representative of the observed dynamics. Overall, the resulting mean trajectory does not correspond to a physically meaningful solution of the system. The breakdown arises from a structural incompatibility. The Fréchet mean continuously aligns and averages the sample curves, whereas the underlying variability is discrete. This structural mismatch prevents the averaged trajectory from preserving a coherent bursting structure. 

This heterogeneous setting raises two natural questions: How can one systematically quantify whether a computed Fréchet mean provides a meaningful summary of the sample trajectories? And how can one handle situations in which the averaging procedure becomes unreliable? In the following, we address the first question by applying the diagnostic tools introduced in Section~\ref{sec:diagnostics}.

\begin{figure}[h]
\centering

\begin{subfigure}[t]{0.26\textwidth}
    \vspace{0.7cm}
    \centering

    \includegraphics[
        width=\linewidth,
        height=.18\textheight,
        keepaspectratio]{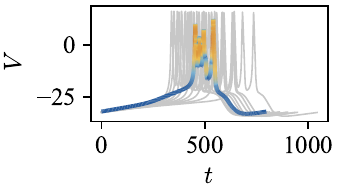}

    \vspace{0pt}

    \includegraphics[
        width=\linewidth,
        height=.18\textheight,
        keepaspectratio]{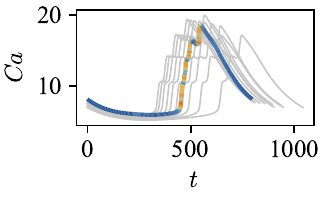}
    \vspace{0pt}
    \caption{Time projections}
    \label{fig:ml_heterogeneous_a}
\end{subfigure}
\hspace{0.02\textwidth}
%
\begin{subfigure}[t]{0.38\textwidth}
    \vspace{0pt}
    \centering

    \includegraphics[width=\linewidth]{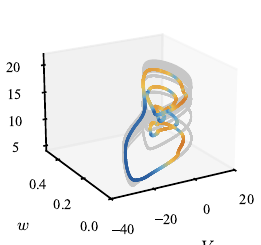}
    \vspace{0pt}
    \caption{Fréchet mean}
    \label{fig:ml_heterogeneous_b}
\end{subfigure}
\hspace{0.02\textwidth}
%
\begin{subfigure}[t]{0.26\textwidth}
    \vspace{0.7cm}
    \centering

    \includegraphics[
        width=\linewidth,
        height=.18\textheight,
        keepaspectratio]{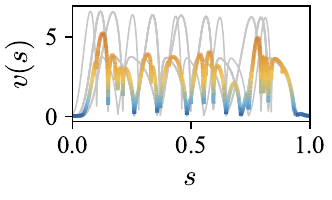}

    \vspace{0pt}

    \hspace*{-1.3em}
    \includegraphics[
        width=1.1\linewidth,
        height=.18\textheight,
        keepaspectratio]{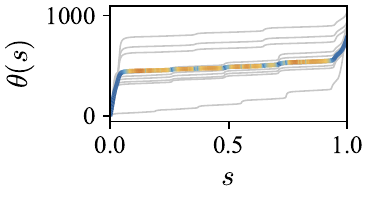}
        \vspace{-0.1cm}
    \caption{Dynamic profile}
    \label{fig:ml_heterogeneous_c}
\end{subfigure}

\caption{Fr\'echet mean and reconstructed dynamics for the heterogeneous Morris--Lecar bursting regime using trajectories generated from input currents $I\in[44.8,45.4]$. In this parameter regime, the sample trajectories exhibit different numbers of bursts within one global period, leading to substantial geometric heterogeneity. (a) Time projections of the membrane potential $V$ and calcium concentration $[\mathrm{Ca}]$ for the sample trajectories together with the reconstructed Fr\'echet mean. Although the mean roughly reflects the dominant four-burst structure present in the dataset, the bursts become visibly distorted due to averaging across incompatible bursting patterns. (b) Geometric and dynamically reconstructed Fr\'echet mean in phase space. The geometric averaging procedure fails to preserve a coherent bursting structure, producing partially collapsed and distorted oscillatory features caused by interpolation between trajectories with different burst counts. While the reconstructed dynamics still recover the distinction between slow motion along the slow manifold and fast burst segments, additional slow regions appear within the bursts themselves, indicating inconsistencies introduced by the averaging process. (c) Aligned speed profiles $v(s)$ and cumulative traversal-time maps $\theta(s)$. In particular, the speed profiles reveal multiple competing peaks originating from trajectories with different bursting structures. The averaging procedure attempts to combine these incompatible temporal features, resulting in a distorted dynamic summary with multiple broadened speed peaks. Overall, the figure demonstrates that the Fr\'echet mean no longer provides a geometrically or dynamically representative summary in the heterogeneous bursting regime.}

\label{fig:ml_heterogeneous}
\end{figure}

\subsection{Quantification of Fréchet Mean for Homogeneous and Heterogeneous Burst Count}
We now seek quantitative diagnostics that detect the structural breakdown. To this end, we employ curvature-based measures and medoid-based structural indices, introduced in Section~\ref{sec:diagnostics}, in order to quantify the sensibility of the Fréchet mean.

\paragraph{Curvature-based diagnostics}
We first analyze curvature-based quantities to compare the homogeneous and heterogeneous bursting regimes. Specifically, we consider the total curvature and the bending energy which measure cumulative turning and smoothness of a closed curve $\gamma$, respectively. In the homogeneous bursting regime, both quantities exhibit a tight concentration across the sample trajectories, and the corresponding values of the Fréchet mean lie within the same range, see Figure~\ref{fig:curvature_metrics_homogeneous}. This indicates that the mean preserves the characteristic geometric complexity and smoothness of the dataset. In contrast, the heterogeneous bursting regime reveals two distinct effects, see Figure~\ref{fig:curvature_metrics_heterogeneous}. First, the total curvature values form well-separated clusters. Since total curvature accumulates turning along the trajectory, these clusters correspond to trajectories with different numbers of bursts. Thus, total curvature naturally separates the dataset into geometrically distinct families. The Fréchet mean does not lie within any of these clusters, indicating that it does not share the structural characteristics of any coherent burst-count class. Second, while the bending energy of the sample trajectories remains relatively concentrated, the bending energy of the Fréchet mean is significantly larger. This reflects the presence of additional curvature oscillations introduced by averaging trajectories with incompatible burst structure. Hence, the mean exhibits artificial geometric irregularities not present in the data. Together, these observations show that total curvature detects geometric structures in the data, while bending energy reveals additional oscillations introduced by averaging incompatible trajectories, thereby providing diagnostics to assess the geometric representativeness of the Fréchet mean.

\begin{figure}[t]
\centering

\begin{subfigure}[t]{0.46\textwidth}
    \centering

    \includegraphics[width=0.48\linewidth]
    {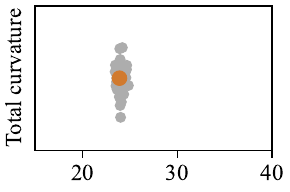}
    \hfill
    \includegraphics[width=0.48\linewidth]
    {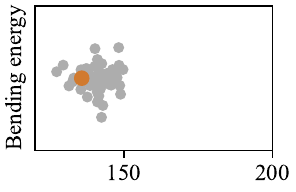}

    \caption{Homogeneous}
    \label{fig:curvature_metrics_homogeneous}
\end{subfigure}
\hfill
\begin{subfigure}[t]{0.46\textwidth}
    \centering

    \includegraphics[width=0.48\linewidth]
    {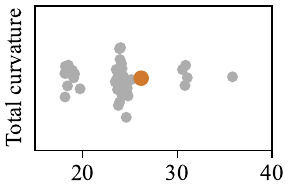}
    \hfill
    \includegraphics[width=0.48\linewidth]
    {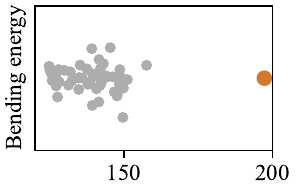}

    \caption{Heterogeneous}
    \label{fig:curvature_metrics_heterogeneous}
\end{subfigure}

\caption{Curvature-based diagnostics for the Morris--Lecar system in homogeneous and heterogeneous bursting regimes. (a) Homogeneous regime with parameter values $I\in[45,45.3]$. The total curvature and bending energy of the Fr\'echet mean lie within the range of the sample trajectories, which form a geometrically coherent data cloud. This indicates that the Fr\'echet mean preserves the characteristic geometric structure and curvature properties of the dataset when all trajectories exhibit the same burst count. (b) Heterogeneous regime with parameter values $I\in[44.8,45.4]$. The total curvature of the sample trajectories forms distinct clusters corresponding to different burst counts, while the Fr\'echet mean attains an intermediate curvature value located between these clusters. At the same time, the bending energies of the sample trajectories remain comparatively concentrated, whereas the Fr\'echet mean exhibits a substantially larger bending energy. This indicates the introduction of artificial geometric irregularities and oscillatory artifacts caused by averaging trajectories with incompatible bursting structures, thereby signaling a loss of structural representativeness of the Fr\'echet mean.}
\label{fig:curvature_metrics}
\end{figure}

\paragraph{Medoid-based diagnostics}
While curvature metrics detect geometric distortion and clustering at the level of individual trajectories, they do not directly quantify how well the Fréchet mean represents the dataset as a whole. To assess representativeness more directly, we compare the Fréchet mean to the Fréchet medoid and analyze complementary diagnostics, see Table~\ref{tab:medoid_diagnostics}.

\begin{table}[t]
\centering
\caption{Medoid-based diagnostic for homogeneous and heterogeneous bursting cases.}
\label{tab:medoid_diagnostics}
\begin{tabular}{lc|cccccc}
\toprule
 & $I$ & $\mathrm{Var}_F$ & $\mathrm{Var}_F^{\mathrm{med}}$ & $d(\mu_F, \mu_F^{\mathrm{med}})^2$ & $\delta_F$ & $\delta_F^{\mathrm{med}}$ & $\mathcal{S}$ \\
\midrule
Homogeneous   & 45--45.3 & 0.030 & 0.032 & 0.006 & 0.005 & 0.0002 & 32.220 \\
Heterogeneous & 44.8--45.4 & 58.280 & 80.008 & 21.885 & 21.885 & 0.002 & 10199.766 \\
\bottomrule
\end{tabular}
\end{table}

The empirical Fréchet variance $\mathrm{Var}_F$ increases from $0.030$ in the homogeneous case to $58.280$ in the heterogeneous case. A similar increase is observed for the medoid variance $\mathrm{Var}_F^{\mathrm{med}}$. This substantial growth reflects a dramatic increase in global geometric dispersion of the dataset. However, variance alone does not detect structural incompatibility: large dispersion may arise either from continuous deformation within a coherent family of trajectories or from the coexistence of structurally distinct regimes. Thus, variance quantifies global spread but does not by itself imply a structural breakdown of the Fréchet mean.

The squared distance $d(\mu_F,\mu_F^{\mathrm{med}})^2$ measures the deviation of the Fréchet mean from the medoid and thus quantifies how far the mean lies from a structurally central observed trajectory. It increases from $0.006$ in the homogeneous regime to $21.885$ in the heterogeneous regime. In the homogeneous case, the Fréchet mean lies very close to the medoid, indicating that it behaves like a typical trajectory of the dataset. In contrast, in the heterogeneous regime, the mean is far from the medoid, see Figure~\ref{fig: ml-medoid-vs-mean}, suggesting that it no longer reflects a representative observed orbit. 

\begin{figure}[h]
\centering

\begin{subfigure}[b]{0.37\linewidth}
    \centering
    \includegraphics[width=\linewidth]{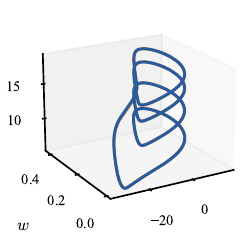}
    \caption{Homogeneous}
\end{subfigure}
\hspace{0.05\textwidth}
\begin{subfigure}[b]{0.37\linewidth}
    \centering
    \includegraphics[width=\linewidth]{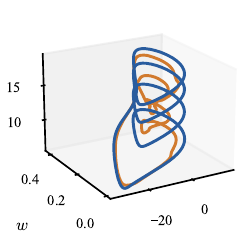}
    \caption{Heterogeneous}
\end{subfigure}

\caption{Comparison of the Fr\'echet mean and Fr\'echet medoid for the homogeneous and heterogeneous Morris--Lecar bursting regimes. (a) In the homogeneous regime, the Fr\'echet mean lies very close to the Fr\'echet medoid and reproduces the characteristic four-burst structure of the sample trajectories. This indicates that the Fr\'echet mean remains geometrically representative of the dataset and closely resembles a typical observed trajectory. (b) In the heterogeneous regime, the Fr\'echet mean differs substantially from the Fr\'echet medoid and exhibits distorted oscillatory structures that are not present in the observed data. The strong discrepancy between mean and medoid indicates a loss of representativeness caused by structural heterogeneity and averaging across trajectories with incompatible bursting patterns.}
\label{fig: ml-medoid-vs-mean}
\end{figure}

To compare the geometric position of the Fréchet mean and medoid relative to the data, we consider the nearest-neighbor distances \(\delta_F \) and \(\delta_F^{\mathrm{med}}\) and the resulting ratio \(\mathcal{S} \). The quantity $\delta_F$ measures how close the Fréchet mean lies to its nearest observed trajectory, while $\delta_F^{\mathrm{med}}$ measures the distance from the medoid to its closest neighboring sample curve. The ratio $\mathcal{S}$ thus quantifies how strongly the mean is locally supported by the data relative to a structurally central observed trajectory. In the homogeneous regime, both distances are small ($0.005$ and $0.002$), yielding $\mathcal{S}=32.220$. Thus, although the mean is not itself a data point, it lies on the same geometric scale as intrinsic nearest-neighbor distances in the dataset. In the heterogeneous regime, however, a clear scale separation emerges: while the medoid remains close to at least one neighboring trajectory ($\delta_F^{\mathrm{med}}=0.002$), the nearest-neighbor distance of the Fréchet mean increases dramatically to $21.885$, yielding a ratio of $\mathcal{S} \approx 10200$. This indicates that the mean no longer lies within the local support scale of the data. In other words, although a representative trajectory exists within the dataset in the form of the medoid, the Fréchet mean becomes geometrically isolated and therefore fails to represent any typical observed trajectory.

\paragraph{Summary of diagnostics}
Taken together, the diagnostics show that the Fréchet mean no longer represents a typical trajectory when multiple burst-count classes coexist. Rather than preserving a coherent trajectory structure, it averages across incompatible bursting regimes and introduces geometric features that are not present in the observed data. Consequently, these diagnostics provide practical indicators for identifying situations in which the Fréchet mean ceases to serve as a meaningful summary of the dataset.

\section{Discussion and Outlook}\label{sec:discussion}
\subsection{Summary and Main Contributions}
This work developed a framework for computing Fréchet means of periodic trajectories by modeling
closed curves as elements of the quotient space $\mathcal{C} = L^2(\mathcal{S}^1, \mathbb{R}^d)/\mathcal{S}^1$,
equipped with a phase-aligned metric obtained by minimizing the $L^2$ distance over circular phase
shifts. A central observation is that the choice of parametrization fundamentally determines the
character of the resulting mean: time parametrization preserves dynamical information but may distort
the geometric shape of the orbit family, while arc length parametrization yields geometrically
faithful averages at the cost of discarding temporal dynamics. To reconcile these two viewpoints,
geometry and dynamics are treated as complementary components. The geometric Fréchet mean is
computed from arc length parametrized curves, and representative dynamics are subsequently
reconstructed via harmonic averaging of aligned speed profiles, motivated by the additivity of
traversal times along the mean geometry. A diagnostic framework based on curvature quantities and
medoid-based statistics was introduced to assess when the empirical Fréchet mean constitutes a
meaningful representative of the sample. Applications to the Van der Pol oscillator, the
Rosenzweig--MacArthur predator--prey model, and the Morris--Lecar bursting model showed that the
framework yields geometrically consistent and dynamically meaningful summaries in structurally
homogeneous settings, while the diagnostics reliably detect cases where qualitative heterogeneity in the sample causes the empirical Fréchet mean to lose its interpretability as a representative trajectory.

The main contributions are as follows. A metric framework for periodic trajectories
is introduced by adapting the quotient space construction from shape analysis to the
dynamical systems setting, deliberately restricting the quotient structure to circular phase shifts only and thus preserving the physical
meaning carried by the absolute position, orientation, and scale of a trajectory.
This yields $\mathcal{C} = L^2(\mathcal{S}^1, \mathbb{R}^d)/\mathcal{S}^1$ as the natural setting for
comparing periodic orbits independently of their starting point. Existence of the
empirical Fréchet mean in $\mathcal{C}$ is then established in
Proposition~\ref{prop:existence minimizer} via the direct method in the calculus of
variations, adapted to this infinite-dimensional quotient setting. Theorem~\ref{theo:convergence}
analyzes the iterative alignment-averaging algorithm as block coordinate descent on
the joint functional $G(c, \tau)$, establishing monotone decrease of the empirical
Fréchet functional and subsequential weak
convergence of the iterates in $L^2$.
The decoupled geometric and dynamic harmonic averaging procedure constitutes a principled method for constructing a
representative mean trajectory that combines geometric fidelity with a consistent
dynamical interpretation. The diagnostic framework combines curvature-based and medoid-based quantities to
assess representativeness, providing practical tools for detecting geometric
artifacts, displacement from the data cloud, and lack of local support as distinct
failure modes of the empirical Fréchet mean.


\subsection{Limitations}
A limitation of the proposed framework is its reliance on the $L^2$ metric together with alignment by constant circular shifts. While computationally efficient, this alignment can only correct global phase differences and cannot account for local timing variability within a trajectory. As a result, corresponding features that are slightly shifted in time across trajectories may be averaged imperfectly, leading to smoothing artifacts such as broadened or attenuated maxima in the empirical Fréchet mean. More flexible alignment methods, such as those based on the square-root velocity function (SRVF) framework~\cite{srivastava2016functional}, can accommodate local temporal deformations and may reduce such artifacts, but they come at substantially higher computational cost, do not preserve the Hilbert space structure underlying the present approach, and introduce additional questions regarding the physical interpretability of the induced time warping.

The convergence result of Theorem~\ref{theo:convergence} is weaker than one
might hope: the non-convexity of $F^{(N)}$ implies that the algorithm may converge to different
limit points depending on the initialization, and whether these limit points are stationary points of $F^{(N)}$ is not characterized. Nevertheless, the theorem guarantees monotone decrease of the empirical Fréchet functional and boundedness of the iterates, providing a basic level of algorithmic stability. While stronger convergence results remain open, these properties are sufficient for the practical computations considered in this work.


Moreover, the empirical Fréchet mean is a curve constructed to minimize a geometric functional, not a trajectory of the underlying dynamical system. It need not satisfy any differential equation, respect invariant manifold structure, or correspond to a physically realizable orbit. Rather than representing a solution of the system, its role is to provide a notion of central tendency for a family of trajectories in the chosen metric space. As such, its relevance derives from its ability to characterize the orbit family as a whole, rather than from satisfying the dynamical laws that generated the individual trajectories.



\subsection{Future Directions}

\paragraph{Uncertainty quantification for the Fr\'echet mean}
The empirical Fr\'echet mean is a point estimate and the present work provides no
quantification of how the estimator varies with the sample, for instance as $N$
grows or under resampling. 
A bootstrap procedure, e.g. resampling the orbit family with replacement and
recomputing the Fr\'echet mean for each resample, would yield pointwise
confidence tubes around the mean curve and bootstrap distributions of the
scalar diagnostic quantities. The latter is particularly useful, as
substantial variability of the diagnostics across resamples would indicate
that the failure diagnosis itself is unstable. For finite-dimensional
Riemannian manifolds, \cite{bhattacharya2005large} establishes bootstrap
confidence regions for the Fr\'echet mean with coverage error $O_p(n^{-2})$. This would provide a principled notion of statistical confidence for Fr\'echet means and their diagnostic quantities in dynamical systems applications.

\paragraph{Handling structural failure}
The diagnostic framework identifies when the Fréchet mean fails to represent the sample, but does
not prescribe a remedy. When structural heterogeneity is detected, a natural approach is to
partition the orbit family into coherent subgroups using $k$-medoids clustering with the pairwise
distances $d(\gamma_j, \gamma_k)$ already available from the algorithm, compute class-conditional
Fréchet means within each group, and use the diagnostics to guide the choice of the number of
classes. This connects the framework to mixture models on metric spaces and makes the failure
response operational.

\paragraph{Connection to bifurcation theory}
The Morris--Lecar example illustrates that topological changes in the orbit family---here, 
changes in burst count per period---cause the isolation ratio and variance gap to increase
sharply. Tracking these diagnostic quantities along a parameter path therefore provides a
data-driven signal for dynamical regime change, without requiring knowledge of the underlying
bifurcation structure. A precise formulation of this connection, for instance relating the growth
of $\mathcal{S}$ to specific bifurcation types, is a natural direction for future work. 

\paragraph{Alternative metric and quotient structures}
A systematic comparison of the Fréchet means obtained under different metric and alignment choices---including the $L^2$ metric with constant circular shifts, Sobolev-type metrics incorporating derivative information, and SRVF-based approaches with dynamic warping---would help clarify which geometric structures are most appropriate for different classes of dynamical systems. Such a study could assess the tradeoff between computational tractability, geometric fidelity, and dynamical interpretability. 

\paragraph{Alternative notions of mean}
The Fréchet mean minimizes the expected squared distance and is the natural $L^2$ notion of central tendency on a metric space, but alternative notions may be more appropriate in certain settings. The geometric median, minimizing $\mathbb{E}[d(c,X)]$, is more robust to geometrically extreme trajectories and requires only a finite first moment. The Doss expectation~\cite{doss1949moyenne} similarly avoids second-moment assumptions, while the Sturm barycenter~\cite{sturm2003probability} provides an iterative construction of the Fréchet mean in metric spaces of nonpositive curvature. A systematic comparison of these notions could clarify how different definitions of centrality affect robustness, representativeness, and computational tractability for trajectory data.

\paragraph{Extensions to other invariant sets and applications}
The framework is currently restricted to periodic orbits. A natural extension covers quasi-periodic
trajectories on invariant tori, where the relevant quotient group is $\mathbb{T}^2$ rather than
$\mathcal{S}^1$. Applications to ergodic dynamics on strange attractors would require a different notion of
representative geometry altogether. On the applied side, the framework is directly motivated by
uncertainty quantification for oscillatory climate patterns such as ENSO cycles, where families of
near-periodic orbits arise naturally from parameter uncertainty or model ensembles, and where a
principled notion of a representative cycle is of direct practical interest.

\section*{Acknowledgments}
Generative AI tools were used during the preparation of this manuscript to assist with discussion of research ideas, software development, computational implementation, and refinement of exposition and phrasing. All mathematical results, computational experiments, and conclusions were verified by the authors. The authors assume responsibility for all content.


\printbibliography

\end{document}